\newtheorem{thm}{Theorem}[section]
\newtheorem{cor}[thm]{Corollary}
\newtheorem{lemma}[thm]{Lemma}
\newtheorem{prop}[thm]{Proposition}
\theoremstyle{remark}
\newtheorem{remark}[thm]{Remark}
\newtheorem*{remark*}{Remark}
\numberwithin{equation}{section}
\newcommand\EE{{\mathcal{E}}}
\newcommand\FF{{\mathcal{F}}}
\newcommand\JJ{{\mathcal{J}}}
\newcommand\UU{{\mathcal{U}}}
\newcommand\UUseg{{\mathcal{U}_{\text{seg}}}}
\newcommand\eps{{\varepsilon}}
\newcommand\R{{\mathbb R}}
\newcommand\uu{{\mathbf u}}
\newcommand\vv{{\mathbf v}}
\newcommand\ww{{\mathbf w}}
\newcommand\ebeta{_{\eps,\beta}}
\newcommand\dx{{\,dx}}
\newcommand\dt{{\,dt}}
\newcommand\emt{{e^{-t/\eps}}}
\newcommand\de{\mathop{\partial_t\!}}
\title{A minimization procedure to the existence of segregated \\ solutions to parabolic reaction-diffusion systems
\footnote{Work partially supported by the ERC Advanced Grant No. 339958 ``Complex Patterns for Strongly Interacting Dynamical Systems - COMPAT'' and by the ERC Starting Grant No. 801867 ``Regularity and singularities in elliptic PDE - EllipticPDE''.}}
\author{Alessandro Audrito$^{1,2}$,  Enrico Serra$^2$,
Paolo Tilli$^2$ \\ \ \\{\small  $^1$ Institut f\"ur Mathematik,
Universit\"at Z\"urich}\\
{\small
Winterthurerstrasse 190
CH-8057 Z\"urich}\\
{\small $^2$ Dipartimento di Scienze
Matematiche ``G.L. Lagrange'', Politecnico di Torino } \\ {\small
Corso Duca degli Abruzzi, 24, 10129 Torino, Italy}}
\date{}
\begin{document}

\maketitle

\begin{abstract}
We study the existence of segregated solutions to a class of reaction-diffusion systems with strong interactions, arising in many physical applications. These special solutions are obtained as weak limits of minimizers of a family of perturbed functionals. We prove some a priori estimates through a minimization procedure which is nonstandard in the parabolic theory: our approach is purely variational and all the information is encoded in the family of functionals we consider.
\end{abstract}

\noindent{\small \emph{2010 AMS Subject Classification:} 35B25, 35K57, 49J45.}

\noindent{\small \emph{Keywords:} parabolic reaction-diffusion systems, minimization, strong competition, a priori estimates.}

\section{Introduction}\label{SectionIntroduction}

Let $\Omega \subset \mathbb{R}^N$ be a bounded domain with smooth boundary and
let  $A := (a_{ij})_{i,j}$ be a $k \times k$ symmetric matrix ($k \ge 2$)
satisfying  $a_{ii} = 0$  and $a_{ij} > 0$ if $i \not= j$. We study the existence of a special class of solutions to the system of parabolic equations

\begin{equation}
\label{parsys}
\de v_i - \Delta v_i = f_i(v_i) - \beta \, v_i \sum_{j \not = i} a_{ij} v_j^2 \quad \text{ in } \Omega \times (0,+\infty),\qquad i= 1,\dots, k
\end{equation}
in the \emph{strong interaction} regime $\beta \to + \infty$. To fix ideas, one should think that the nonlinearities $f_i$ behave like the logistic reaction $f_i(s) = s(1-s)$.

Each equation is coupled with some boundary conditions assigned on the parabolic boundary $\Omega \times\{0\} \cup \partial \Omega \times \R^+ $: for example, we can assume

\begin{equation}
\label{bc}
\begin{cases}
v_i(x,t) = g_i(x),         & (x,t) \in \partial\Omega \times (0,+\infty) \\
v_i(x,0) = v_{0i}(x),    & x \in \Omega,
\end{cases}
\end{equation}
for a suitable choice of functions $g_i$ and $v_{0i}$, $i = 1,\ldots,k$, but also other kinds of boundary conditions can be considered. In any case, we always suppose that the the initial data $v_{0i}$ satisfy the {\em segregation condition}
\[
v_{0i} \cdot v_{0j} = 0 \quad \text{a.e. in } \Omega \quad \text{ for all } i \not = j.
\]
This class of systems appears in a wide variety of biological and physical applications, such as population dynamics and Bose-Einstein condensates. Notice that the time variation of each component $v_i$ is given by the combination of three terms: the first is diffusion, the second is given by the $i$-th internal energy $f_i$, while the third  is  the sum of \emph{penalizing interaction} terms, which become stronger and stronger as $\beta$ grows: investigating the singular range $\beta = +\infty$ is the main goal of this paper.

In  \cite{DanWanZha2012_1:art}, Dancer, Wang and Zhang proved that if a sequence of bounded  solutions to \eqref{parsys}--\eqref{bc} converges as $\beta \to +\infty$ to some $(w_1,\dots,w_n)$, then these functions are weak solutions to the system of differential inequalities
\begin{equation}
\label{eq:DiffIneq}
\begin{cases}
(\de - \Delta ) w_i  \leq f_i(w_i),\quad  i = 1,\ldots,k  \\ \\
(\de - \Delta) \Big( w_i - \sum\limits_{j\not=i}w_j \Big ) \geq f_i(w_i) - \sum\limits_{j\not=i} f_j(w_j),
\quad i = 1,\ldots,k,
\end{cases}
\end{equation}
that have the \emph{segregated} form
\[
w_i \cdot w_j = 0 \quad \text{a.e. in } \Omega \times (0,+\infty) \quad \text{ for all } i \not = j
\]
and satisfy the same boundary conditions \eqref{bc}.
Such solutions are interesting both from  the mathematical and the physical viewpoint and, in the present work, will be obtained through a nonstandard procedure, based on the minimization of a special functional introduced by  De Giorgi in the context of nonlinear wave equations. Before describing our approach, a quick bibliographical survey is  in order.

The study of segregated solutions has been quite popular in past and recent years. The first works can be dated back to the 90's (see for instance Dancer \cite{Dan1991:art}, and Dancer and Du \cite{DanDu1994:art}). More recently, much work has been carried out in the \emph{elliptic} framework, namely the study of  Gross-Pitaevskii-type systems

\begin{equation}\label{eq:EllipticSystemIntro}
- \Delta v_i = f_i(v_i) - \beta \, v_i \sum_{j \not = i} a_{ij} v_j^2 \quad \text{ in } \Omega
\end{equation}
for $i = 1,\ldots,k$ and $\beta \to +\infty$. The existence of segregated solutions was studied
by Conti, Terracini and Verzini in \cite{ConTerVer2002:art,ConTerVer2003:art,ConTerVer2005:art}, where the authors established a connection between the singular limit of minimal solutions to \eqref{eq:EllipticSystemIntro} and  the solutions of a related optimal partition problem. Independently, similar results were obtained by Caffarelli and Lin \cite{CafLin2007:art,CafLin2008:art} (see also \cite{CafSal2005:book}). Subsequently,
in a series of articles \cite{ConTerVer2005_1:art,NorTavTerVer2010:art,SoaveZilio2015:art,TavTer2012:art}, it was shown that solutions to \eqref{eq:EllipticSystemIntro} are locally uniformly H\"older (or even Lipschitz) continuous and finer properties of the limiting solutions and their \emph{free boundaries} were established.

The parabolic setting is less studied and seems to be more delicate. We quote the works of Dancer, Wang and Zhang \cite{DanWanZha2011:art,DanWanZha2012_1:art}, where the authors proved uniform H\"{o}lder (and Lipschitz) estimates of parabolic type and derived system \eqref{eq:DiffIneq} for general solutions to \eqref{parsys}, under some convergence assumptions, when $\beta \to +\infty$ (see also \cite{DanWanZha2012:art,WangZhang2010:art} for the Lotka-Volterra framework). The proof is quite involved and uses several  advanced technical tools such as Almgren and Alt-Caffarelli-Friedman monotonicity formulas, as well as blow-up and dimension reduction arguments.  We finally quote the works of Caffarelli and Lin, \cite{CafLin2009:art}, and Snelson, \cite{Snelson2015:art}, for a parallel study in the context of gradient flows with singular values.

The aim of this work is twofold. First, if compared to \cite{DanWanZha2011:art,DanWanZha2012:art,DanWanZha2012_1:art,WangZhang2010:art}, our approach is somehow inverse: in these papers the authors study the asymptotic behaviour as $\beta \to +\infty$ of general solutions to \eqref{parsys}--\eqref{bc}, while we approximate \eqref{bc}--\eqref{eq:DiffIneq} through a new family of perturbed problems. Our method can be seen as the \emph{parabolic} counterpart of the study carried out in \cite{ConTerVer2005:art} in the \emph{elliptic} framework, where difficulties can be overcome since \eqref{eq:EllipticSystemIntro} has a clear variational structure while, in the parabolic setting, this structure is apparently lacking. As a consequence, we obtain a very direct proof of the existence of \emph{bounded global segregated} weak solutions to problem \eqref{bc}--\eqref{eq:DiffIneq}.

Secondly, as we have mentioned above, we want to draw the attention on the fact that this class of solutions is obtained through a {\em minimization procedure}, based on a conjecture of De Giorgi in a different setting (\cite{DeGiorgi1996:art}), which is quite innovative in this framework. It is a very direct and flexible method: we recover the  convergence results of the elliptic setting, \cite{ConTerVer2002:art,ConTerVer2005:art,NorTavTerVer2010:art,TavTer2012:art} and, further, we will extend it to more general systems of parabolic equations in a forthcoming paper.

We now describe, in a rather informal way, the main ideas involved in our approach, postponing precise definitions and statements to subsequent sections.

For  $\eps, \beta > 0$ we consider the functional

\begin{equation}\label{eq:DeGiorgiFunctional}
\FF_{\eps,\beta}(\vv) := \int_0^\infty\!\!\int_{\Omega} \frac{e^{-t/\eps}}{\eps}  \Big\{ \varepsilon \sum_{i=1}^k |\de v_i |^2  + \sum_{i=1}^k \left[|\nabla v_i|^2 - 2F_i(v_i)\right] + \frac{\beta}{2}\sum_{i,j = 1}^ka_{ij} v_i^2v_j^2 \Big\} \,dx dt,
\end{equation}
where $F_i(s) = \int_0^s f_i(s)ds$ for each $i$. If the functional $\FF_{\eps,\beta}$ has a global minimizer  $\vv_{\eps,\beta} = (v_1,\dots,v_k)$ in some suitable space (and among functions satisfying \eqref{bc}), then for every test function $\eta\in C^\infty_0(\Omega\times \R^+)$
the Euler--Lagrange equations read
\[
\int_0^\infty\!\!\int_{\Omega} \frac{e^{-t/\eps}}{\eps} \Big\{\eps \de v_i \de\eta + \nabla v \cdot\nabla\eta - F_i'(v_i)\eta + \beta v_i \sum_{j \not = i} a_{ij} v_j^2\eta  \Big\} \,dxdt =0,\quad i=1,\ldots,k,
\]
which is the weak form of

\begin{equation}
\label{strong}
- \eps \partial_{tt} v_i + \de v_i -  \Delta v_i=   F'_i(v_i) - \beta \, v_i \sum_{j\not=i} a_{ij} v_j^2 = 0,\quad i=1,\ldots,k.
\end{equation}
Since $f_i=F_i'$,  this is system \eqref{parsys}, with the addition of the extra term $\eps \partial_{tt} v_i$. Assuming now that one has suitable boundedness and compactness properties (with respect to the parameters $\eps$ and $\beta$), one can try to pass to the limit as $\eps \to 0$ and $\beta \to +\infty$, in order to obtain \emph{segregated} solutions to \eqref{bc}--\eqref{eq:DiffIneq}. There are basically two options.

The first one is to keep $\beta > 0$ fixed and take the limit as $\eps \to 0$: in this case one could prove that the term $\eps \partial_{tt} v_i$ goes to zero and obtain limit solutions that satisfy \eqref{parsys}--\eqref{bc} and so, as a consequence of \cite{DanWanZha2011:art,DanWanZha2012_1:art}, construct segregated weak solutions to \eqref{bc}--\eqref{eq:DiffIneq} passing to the limit as $\beta \to +\infty$. As we have mentioned above, studying the asymptotic regime $\beta = +\infty$ is however a difficult problem and involves several technical tools.

Conversely, the option we follow is to fix $\eps \in (0,1)$ and take the limit as $\beta \to +\infty$. In this way, the coefficient of the last term in the functional $\FF_{\eps,\beta}$ goes to infinity  (strong interaction regime) and suggests that

\[
v_i \cdot v_j \to 0 \quad \text{ as } \beta \to +\infty, \quad i \not= j,
\]
as expected.  Indeed, for any fixed $\eps \in (0,1)$, the family of minimizers $\{\vv_{\eps,\beta}\}_{\eps,\beta}$ of $\FF_{\eps,\beta}(\vv)$ converges as $\beta \to \infty$ (in a suitable sense) to a {\em global minimizer} $\vv_\eps$ of

\begin{equation}\label{defFeps}
\FF_{\varepsilon}(\vv) :=    \int_0^\infty\!\!\int_{\Omega} \frac{e^{-t/\eps}}{\eps}  \Big\{ \varepsilon \sum_{i=1}^k |\de v_i |^2  + \sum_{i=1}^k \left[|\nabla v_i|^2 - 2F_i(v_i)\right] \Big\} \,dx dt
\end{equation}
having \emph{segregated} components $v_i \cdot v_j = 0$, $i \not= j$. Notice that the corresponding Euler-Lagrange equations are now \eqref{strong} without the interaction terms (i.e. $\beta = 0$). Then we study the limit of $\vv_\eps$ as $\eps \to 0$ and we prove that the limit functions (that have segregated components for free) solve \eqref{eq:DiffIneq}.

To carry out this program it is therefore necessary to obtain uniform bounds in $\eps$ and $\beta$ for the sequence $\vv = \vv_{\eps,\beta}$, and this will be the central part of our argument. We point out however that there is no need to consider equation \eqref{strong}, as all the information is embodied in the functional $\FF_{\eps,\beta}$, and we will accordingly work directly with $\FF_{\eps,\beta}$. 

We believe that this is one of the main points of interest of our work, and constitutes a major novelty in the approach to parabolic segregation problems.
Note also that each functional $\FF_{\eps,\beta}$ is the energy functional associated to an {\em elliptic} problem, once one neglects the interpretation of the variable $t$ as time.  Working with $\FF_{\eps,\beta}$ may be considerably easier than working directly with the parabolic system
\eqref{parsys}, since one can make use of all the machinery proper of elliptic problems, such as the techniques of the Calculus of Variations. This could be of great help if one is interested in proving further regularity and finer properties of the solutions.

On the other hand, as $\eps \to 0$, two problems emerge. The first is that ellipticity is not uniform (and it has to be so, since in the limit we must recover a parabolic problem). The second is due to the presence of the exponential weight in the functional, which degenerates very rapidly as $\eps \to 0$. These two features  make it apparently very  hard to obtain the {\em uniform} estimates in $\eps$ which are needed to complete the limit procedure.

However, a way to overcome this difficulties has been introduced, for hyperbolic problems, in \cite{SerraTilli2012:art,SerraTilli2016:art} (see also \cite{TentTilli2018:art,TentTilli2018_1:art}),
and we show here how to  profit from it  in the parabolic  setting.

This approach is also known in the literature as the\emph{Weighted Inertia-Energy-Dissipation (WIDE)} method, appeared first in the works of Lions and Oleinik (see e.g. \cite{Lions1965:art,LionsMagenes1968:art,Oleinik1964:art}). Successively, it has been investigated by many authors: we quote the works of Akagi, Mielke, Ortiz and Stefanelli \cite{AkagiStefanelli2016:art,MielkeOrtiz2008:art,MielkeStefanelli2008:art,MielkeStefanelli2011:art}, B\"ogelein et al. \cite{BogeleinEtAl2015:art,BogeleinEtAl2017:art} and the references therein. However, our techniques are inspired by the methods used in \cite{SerraTilli2012:art, SerraTilli2016:art} and differ from those of the just mentioned articles.

Summing up, the main questions we address in this paper are the following: does \eqref{eq:DeGiorgiFunctional} have a minimizer for each $\eps \in (0,1)$ and $\beta > 0$? Given a family of minimizers $\{\vv_{\eps,\beta}\}_{\eps,\beta}$ of \eqref{eq:DeGiorgiFunctional}, are there limit functions $\ww$ as $\beta \to +\infty$ and $\eps \to 0$? If yes, what system do they satisfy? Are these segregated solutions? The aim of the present work is to give an answer to these questions.

This paper is organized as follows.
In Section \ref{mainres} we describe the precise functional setting and we state the main results, while in Section \ref{SectionExistenceofMinimizers} we prove  the existence of minimizers of $\FF_{\eps,\beta}$.
Section \ref{SectionEstimates} contains the main estimates, while in Section \ref{double} we study the asymptotic behavior of the family $\{\vv_{\eps,\beta}\}_{\eps,\beta}$ as $\beta \to +\infty$ and $\eps \to 0$ and we prove our main result. Finally, in Section \ref{SectionDiscussions}, we show how analogous results known in the literature for  elliptic problems are contained as a particular case of our result.

\section{Functional setting and main results}
\label{mainres}

From now on we assume that $\eps \in (0,1)$ and $\beta > 0$. Furthermore, we assume that the $k$ functions $F_i:\mathbb{R} \to \mathbb{R}$ satisfy

\begin{equation}\label{eq:AssumptionsPotential}
\begin{cases}
F_i \text{ is continuous on } \mathbb{R} \text{ and differentiable in } [0,1) \\
F_i \text{ is non-decreasing in } (-\infty,0) \text{ and non-increasing in } (1,+\infty) \\
F_i'(0) = 0,
\end{cases}
\end{equation}
for all $i = 1,\ldots,k$. A simple example is obtained by letting $F_i$ be a primitive of  $f_i(v) = v^2(1-v)$.  Our results are valid under weaker assumptions than \eqref{eq:AssumptionsPotential} (as for instance in \cite[Section 2]{ConTerVer2005:art}) but, for simplicity, we limit ourselves to this more basic setting.

Given a vector-valued function $\vv = (v_1,\ldots,v_k)$, writing
\[
|\de \vv|^2 := \sum_{i=1}^k|\de v_i|^2, \quad |\nabla\vv|^2 := \sum_{i=1}^k |\nabla v_i|^2, \quad \langle \vv^2 ,A \vv^2\rangle := \sum_{i,j=1}^k a_{ij} v_i^2v_j^2, \quad F(\vv) := \sum_{i =1}^k F_i(v_i),
\]
we obtain for  the functional $\FF_{\eps,\beta}$ defined
in \eqref{eq:DeGiorgiFunctional} the more compact expression

\begin{equation}
\label{dgv}
\FF_{\eps,\beta}(\vv) :=  \int_0^\infty\!\!\int_{\Omega} \frac{e^{-t/\eps}}{\eps} \Big\{ \eps |\de \vv|^2 + |\nabla \vv|^2 - 2F(\vv) + \frac{\beta}{2} \langle \vv^2, A \vv^2 \rangle  \Big\} \,dxdt.
\end{equation}
A natural domain for this functional is the space
of vector-valued functions

\begin{equation}
\label{defUU}
\UU:=\bigcap_{T>0}  H^1 \left( \Omega_T \right)^k,
\qquad \Omega_T:=\Omega\times (0,T),
\end{equation}
endowed with its natural topology.
 We can view $\FF_{\eps,\beta}$ as a functional defined on $\UU$, and taking values in $(-\infty,+\infty]$.
Indeed, since $a_{ij}\geq 0$, when $\vv\in\UU$ all the
integrands in \eqref{dgv}, with the exception of
$F(\vv)$, are nonnegative, hence their integrals
are well-defined (possibly equal to $+\infty$). On the other
hand, thanks to \eqref{eq:AssumptionsPotential}, every function $F_i$ has a
finite upper bound, hence also the integral of $-2F(\vv)$
in \eqref{dgv} takes values in $(-\infty,+\infty]$.

Moreover, every function  $\vv \in \UU$ has a trace on the parabolic boundary $\Omega\times\{0\} \,\cup\, \partial\Omega\times \R^+$: in particular, the function  $x\mapsto \vv(x,0)$ (the ``initial value'') belongs to $H^{1/2}(\Omega)^k$,
while for a.e. $t>0$ the function $x\mapsto\vv(x,t)$, being an element of $H^1(\Omega)^k$,
has a trace in $H^{1/2}(\partial\Omega)^k$
(the ``boundary value'').

This allows
us to consider the minimization of the functional $\FF$ on $\UU$ subject to
suitable initial and boundary conditions, as follows. Let
\begin{equation}
\label{eq:InitialConditions}
\mathbf{v_0}=(v_{01},\ldots,v_{0k}) \in H^1(\Omega)^k
\end{equation}
be a function satisfying the bounds
\begin{equation}
\label{bound}
0 \leq \mathbf{v_0} \leq 1
\end{equation}
(i.e. $0 \leq v_{0i} \leq 1$ for $i = 1,\ldots,k$) together with the \emph{segregation condition}

\begin{equation}
\label{eq:SegregationCondition}
v_{0i}\cdot v_{0j} = 0 \quad \text{a.e. in } \Omega, \quad \text{ for every } i \not = j,
\end{equation}
and let $\mathbf{g}_0\in H^{1/2}(\partial\Omega)^k$ denote the trace of $\mathbf{v_0}$ on $\partial \Omega$.
We will minimize the functional $\FF$ on the set of functions

\begin{equation}\label{eq:DFSpace}
\UU_{\vv_0,\mathbf{g}_0} := \{\vv \in \UU: \; \vv(\cdot,0) = \vv_0,  \;\;
\vv(\cdot,t) = \mathbf{g}_0 \text{ on $\partial\Omega$ for a.e. $t>0$}\},
\end{equation}
that is, among all functions having $\mathbf{v_0}$ as initial condition at time $t=0$, and
$\mathbf{g_0}$ as Dirichlet boundary condition on $\partial\Omega$
at almost every time $t>0$ (time-dependent boundary
conditions might also be considered, but we will not pursue this here).
Other options are possible: for instance, one may drop the
boundary condition $\vv = \mathbf{g}_0$  and minimize within the larger set of functions

\begin{equation}\label{eq:NFSpace}
\UU_{\vv_0} := \{\vv \in \UU: \; \vv(\cdot,0) = \vv_0  \}
\end{equation}
or, dually, drop the initial condition and consider the set of functions

\begin{equation}\label{eq:EFSpace}
\UU_{\mathbf{g}_0} := \{\vv \in \UU: \;
\vv(\cdot,t) = \mathbf{g}_0 \text{ on $\partial\Omega$ for a.e. $t>0$}\}.
\end{equation}
We point out that the three sets $\UU_{\vv_0,\mathbf{g}_0}$, $\UU_{\vv_0}$
and $\UU_{\mathbf{g}_0}$ are convex and closed in $\UU$ and, since
\begin{equation}
  \label{nonvuoto}
  \widetilde \vv\in \UU_{\vv_0,\mathbf{g}_0}=\UU_{\vv_0}
\cap\UU_{\mathbf{g}_0},\quad \text{where $\,\widetilde\vv(x,t):=\vv_0(x),$}
\end{equation}
each of these sets is nonempty. The last spaces we need to introduce are denoted by $\mathcal{U}_{seg}$ and $\mathcal{S}$, first introduced in \cite{ConTerVer2005:art} for the elliptic problem, and then extended in \cite{DanWanZha2012_1:art,WangZhang2010:art,Wang2015:art} to the parabolic setting.

The space $\mathcal{U}_{seg}$ is the space of \emph{segregated configurations}

\[
\mathcal{U}_{seg} := \bigl\{\vv=(v_1,\ldots,v_k) \in \mathcal{U}_0 : v_i \cdot v_j = 0 \text{ a.e. in } \Omega\times\R^+, \text{ for all } i \not= j\bigr\},
\]
where $\mathcal{U}_0$ denotes either $\UU_{\vv_0,\mathbf{g}_0}$ or $\UU_{\vv_0}$, while $\mathcal{S}$ is made of all \emph{segregated} functions $\vv \in \mathcal{U}_{seg}$, satisfying the system of differential inequalities \eqref{eq:DiffIneq} in the sense of distributions in $\Omega\times\R^+$.
This means that, setting $\widehat{v}_i := v_i - \sum_{j\not=i} v_j$ and $\widehat{f}_i(v_i) := f_i(v_i) - \sum_{j\not=i} f_j(v_j)$, the inequalities

\begin{align*}
&\int_0^{\infty}\int_{\Omega} \left[ \de v_i \eta + \nabla v_i \cdot \nabla \eta - f_i(v_i) \eta \right] dxdt \leq 0, \\
&\int_0^{\infty}\int_{\Omega} \left[ \de \widehat{v}_i \eta + \nabla \widehat{v}_i \cdot \nabla \eta - \widehat{f}_i(v_i)\eta \right] dxdt \geq 0
\end{align*}
hold true for every nonnegative $\eta \in \mathcal{C}_c^{\infty}(\Omega\times\R^+)$. Summing up,

\begin{equation}
\label{eq:Sclass}
\mathcal{S} := \left\{\vv \in \mathcal{U}_{seg}:\,\,
\text{$\vv = (v_1,\ldots,v_k)$ satisfies \eqref{eq:DiffIneq}}
\right\}.
\end{equation}
The present paper is divided into three main sections in which we prove our three main theorems. The first result is quite standard and concerns the existence of global minimizers of $\FF_{\eps,\beta}$, when $\eps \in (0,1)$ and $\beta > 0$ are fixed.

\begin{thm}
\label{minimizers}
Let $\UU_0$ denote one of $\UU_{\vv_0,\mathbf{g}_0}$, $\UU_{\vv_0}$ or $\UU_{\mathbf{g}_0}$.
For every $\eps \in(0,1)$ and $\beta > 0$, the functional $\FF_{\eps,\beta}$ has an absolute minimizer in $\UU_0$. Moreover, every minimizer $\vv=(v_1,\ldots,v_k)$ satisfies $0 \le v_i \le 1$ a.e. in $\Omega \times \R^+$, for every $i=1,\dots,k$.
\end{thm}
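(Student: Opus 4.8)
The plan is to run the direct method of the Calculus of Variations on each closed convex set $\UU_0$; the two points that are not entirely routine are the degenerate weight $\emt/\eps$ combined with the infinite time horizon (which spoil uniform coercivity) and the pointwise bounds for \emph{every} minimizer under the merely monotone hypotheses \eqref{eq:AssumptionsPotential}.

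First I would check that $m:=\inf_{\UU_0}\FF_{\eps,\beta}$ is finite. The terms $\eps|\de\vv|^2$, $|\nabla\vv|^2$ and $\tfrac{\beta}{2}\langle\vv^2,A\vv^2\rangle$ are nonnegative (recall $a_{ij}\ge0$), while by \eqref{eq:AssumptionsPotential} each $F_i$ is bounded above; since $\int_0^\infty\!\int_\Omega (\emt/\eps)\,\dx\dt=|\Omega|$ this yields $\FF_{\eps,\beta}\ge -C|\Omega|>-\infty$. That $m<+\infty$ follows by testing with the stationary competitor $\widetilde\vv(x,t):=\vv_0(x)$ of \eqref{nonvuoto}, whose interaction term vanishes because $\vv_0$ is segregated and $a_{ii}=0$. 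The crucial reduction is then a truncation estimate: if $\vv^\sharp:=\min\{\max\{\vv,0\},1\}$ denotes the componentwise truncation into $[0,1]$, then $\FF_{\eps,\beta}(\vv^\sharp)\le\FF_{\eps,\beta}(\vv)$ and $\vv^\sharp\in\UU_0$. Indeed $|\de v_i^\sharp|\le|\de v_i|$ and $|\nabla v_i^\sharp|\le|\nabla v_i|$ a.e., the monotonicity of $F_i$ outside $[0,1]$ gives $F_i(v_i^\sharp)\ge F_i(v_i)$, and $(v_i^\sharp)^2\le v_i^2$ with $a_{ij}\ge0$ prevents the interaction term from increasing; since the prescribed data $\vv_0,\gz_0$ already lie in $[0,1]$ and truncation commutes with traces, the constraints are preserved. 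Hence I may fix a minimizing sequence $\{\vv_n\}$ with $0\le\vv_n\le1$.

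With $0\le\vv_n\le1$ the $L^\infty$, hence $L^2(\Omega_T)^k$, norms are controlled, and on each slab $\Omega_T$ the weight is bounded below by $\emt/\eps\ge e^{-T/\eps}/\eps>0$, so boundedness of $\FF_{\eps,\beta}(\vv_n)$ forces $\int_{\Omega_T}(|\de\vv_n|^2+|\nabla\vv_n|^2)$ to be bounded for every $T$. Thus $\{\vv_n\}$ is bounded in $H^1(\Omega_T)^k$ for each $T$, and a diagonal extraction over $T=1,2,\dots$ produces a subsequence converging to some $\vv$ weakly in each $H^1(\Omega_T)^k$, strongly in $L^2(\Omega_T)^k$, and a.e.; in particular $0\le\vv\le1$ and, since the $\UU_0$ are convex and closed (hence weakly closed), $\vv\in\UU_0$. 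It then remains to prove lower semicontinuity, which I would split into three parts: the quadratic part $\int(\emt/\eps)(\eps|\de\vv|^2+|\nabla\vv|^2)$ is convex, hence on each $\Omega_T$ weakly $L^2$-lower semicontinuous, and letting $T\to\infty$ by monotone convergence controls the infinite horizon; the interaction term is nonnegative, so Fatou applies to the a.e.\ convergent integrands; and since $0\le\vv_n\le1$ with the weight integrable, dominated convergence makes $\int(\emt/\eps)F(\vv_n)$ converge. Combining, $\FF_{\eps,\beta}(\vv)\le\liminf_n\FF_{\eps,\beta}(\vv_n)=m$, so $\vv$ is an absolute minimizer; this step together with the coercivity above is where the degenerate weight and unbounded time interval form the main obstacle, circumvented by the a priori $L^\infty$ bound and the slicewise argument.

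For the last assertion I would revisit the truncation inequality applied to a given minimizer $\vv$. If $\vv^\sharp$ is its truncation then $\FF_{\eps,\beta}(\vv^\sharp)\le\FF_{\eps,\beta}(\vv)=m$, forcing equality; since each term decreased by a nonnegative amount whose total is zero, every term must be unchanged. In particular $|\nabla v_i|=|\nabla v_i^\sharp|$ and $|\de v_i|=|\de v_i^\sharp|$ a.e., so $\nabla v_i=\de v_i=0$ a.e.\ on $\{v_i>1\}$ and on $\{v_i<0\}$. Hence the full space-time gradient of $(v_i-1)^+$ (resp.\ $(-v_i)^+$) vanishes a.e.; as $\Omega\times(0,T)$ is connected these functions are a.e.\ constant, and since their traces vanish on the parabolic boundary where $0\le\vv_0,\gz_0\le1$, the constants are zero. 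This gives $0\le v_i\le1$ a.e.\ for every minimizer, and it uses only the equality in the Dirichlet and inertial terms, so no strict monotonicity of $F_i$ is required.
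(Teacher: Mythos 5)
Your proposal is correct and follows essentially the same route as the paper: the direct method on the closed convex set $\UU_0$, with componentwise truncation into $[0,1]$ used both to normalize the minimizing sequence and to deduce the pointwise bounds for every minimizer. The only differences are cosmetic: the paper works with the time-rescaled functional $\JJ_{\eps,\beta}$ rather than $\FF_{\eps,\beta}$ itself, and where the paper tersely asserts that truncating a minimizer violating the bounds would strictly decrease the derivative terms, you spell out the equality case (vanishing derivatives on the exceptional set, constancy of $(v_i-1)^+$ and $(-v_i)^+$ by connectedness, and the trace conditions forcing the constants to vanish), which is precisely the justification implicit in the paper's parenthetical remark.
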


In the second theorem, we prove some Sobolev-type estimates on minimizers $\vv_{\eps,\beta}$ of $\FF_{\eps,\beta}$, which are uniform with respect to $\eps \in (0,1)$ and $\beta > 0$. This is the central part of the work: it shows that the family of minimizers $\{\vv_{\eps,\beta}\}_{\eps,\beta}$ is precompact in the weak topology of $\mathcal{U}$.

\begin{thm}\label{ape}
Let $\UU_0$ denote either $\UU_{\vv_0,\mathbf{g}_0}$ or $\UU_{\vv_0}$. Let $\vv= \vv_{\eps,\beta}$ be a minimizer of $\FF_{\eps,\beta}$ in $\UU_0$. Then

\begin{equation}\label{uno}
\int_\tau^{\tau+T}\!\!\!\int_{\Omega} \left\{|\nabla \vv|^2
+ \beta\langle \vv^2, A \vv^2 \rangle\right\} \,dxdt \leq CT \quad\text{$\forall \tau\geq 0\quad\forall T\geq\eps,$}
\end{equation}
\begin{equation}\label{due}
\|\vv\|_{L^\infty(\Omega\times \R^+)^k} \leq C,
\end{equation}

\begin{equation}\label{tre}
\int_0^{\infty}\!\!\!\int_{\Omega} |\partial_{\tau} \vv|^2 \,dxdt \leq C,
\end{equation}
for some constant $C$ independent of $\eps \in (0,1)$ and $\beta > 0$.
\end{thm}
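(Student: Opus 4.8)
The plan is to encode everything in two ingredients and to read off all three estimates from them: a cheap global bound on $\FF\ebeta(\vv)$ produced by a \emph{segregated} competitor, and a monotone ``modified energy'' extracted from the minimality of $\vv$ under time reparametrizations. The only genuinely delicate point will be a lower bound on that energy which is uniform in $\beta$. Since $\vv_0$ is segregated, the competitor $\widetilde\vv(x,t):=\vv_0(x)$ has vanishing penalization, $\langle\vv_0^2,A\vv_0^2\rangle=0$, and no time derivative, so, using $\int_0^\infty(\emt/\eps)\dt=1$,
\[
\FF\ebeta(\vv)\le\FF\ebeta(\widetilde\vv)=\int_\Omega\bigl(|\nabla\vv_0|^2-2F(\vv_0)\bigr)\dx=:M .
\]
By Theorem~\ref{minimizers} one has $0\le v_i\le1$ (which is exactly \eqref{due}), so $-2F(\vv)$ is bounded; as every other integrand is nonnegative this yields a weighted a priori bound $\int_0^\infty\!\!\int_\Omega(\emt/\eps)\bigl(\eps|\de\vv|^2+|\nabla\vv|^2+\tfrac\beta2\langle\vv^2,A\vv^2\rangle\bigr)\dx\dt\le C$, with $C$ independent of $\eps,\beta$. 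Segregation of the \emph{initial} datum is what makes this free of $\beta$, which is why the initial condition cannot be dropped while the Dirichlet datum can.

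Next I would produce the monotone energy. Each reparametrization $\ww(x,t):=\vv(x,\psi(t))$, with $\psi$ increasing, equal to the identity near $0$ and outside a compact set, is admissible (it preserves $\vv(\cdot,0)=\vv_0$ and the trace $\gz_0$ on $\partial\Omega$) and, crucially, it only reparametrizes the penalization term without inflating it. Taking $\psi_s=\mathrm{id}+s\xi$ with $\xi\in C^\infty_c(0,\infty)$ and differentiating $s\mapsto\FF\ebeta(\vv(\cdot,\psi_s(\cdot)))$ at $s=0$, stationarity of $\vv$ gives, after an integration by parts in which the weight is treated through $\eps\,\de(\emt/\eps)=-\emt/\eps$, the pointwise identity
\[
\EE'(t)=-2\int_\Omega|\de\vv(x,t)|^2\dx\le0,\qquad \EE(t):=\int_\Omega\Bigl(|\nabla\vv|^2-2F(\vv)+\tfrac\beta2\langle\vv^2,A\vv^2\rangle-\eps|\de\vv|^2\Bigr)\dx .
\]
Thus $\EE$ is nonincreasing; the same computation yields $\FF\ebeta(\vv)=\EE(0)$, and dropping $-\eps|\de\vv(\cdot,0)|^2\le0$ together with segregation of $\vv_0$ gives $\EE(0)\le M$.

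Granting a uniform lower bound $\EE(t)\ge-C$, the remaining two estimates follow quickly. For \eqref{tre}, integrating the identity over $(0,\infty)$ gives $2\int_0^\infty\!\!\int_\Omega|\de\vv|^2\dx\dt=\EE(0)-\EE(\infty)\le M+C$. For \eqref{uno}, I would write $\int_\Omega(|\nabla\vv|^2+\tfrac\beta2\langle\vv^2,A\vv^2\rangle)\dx=\EE(t)+\eps\int_\Omega|\de\vv|^2\dx+\int_\Omega 2F(\vv)\dx$ and integrate over $[\tau,\tau+T]$: the middle term equals $\tfrac\eps2(\EE(\tau)-\EE(\tau+T))\le\tfrac\eps2(M+C)\le\tfrac T2(M+C)$ because $T\ge\eps$, while $\int_\tau^{\tau+T}\EE\le TM$ (monotonicity) and $\int 2F$ is bounded. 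This is \eqref{uno}, up to the harmless factor $\tfrac12$ in front of the penalization.

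The hard part is precisely the uniform lower bound $\EE(t)\ge-C$, equivalently $\EE(\infty)>-\infty$. Inner (time) variations only reproduce the identity above and give no lower bound, and any competitor agreeing with $\vv$ on $[0,\Theta]$ differs from it only in the exponentially small tail where the weight is of order $\emt\!\mid_{t=\Theta}$; one is therefore forced to use a genuine \emph{outer} variation that changes the spatial profile of $\vv$. The obstacle is that such a competitor must keep $\beta\langle\vv^2,A\vv^2\rangle$ controlled \emph{independently of $\beta$}: a naive affine-in-time interpolation between two non-segregated slices creates an overlap whose interaction energy scales like $\beta$, and no choice of the interpolation length removes this (balancing it against the time-derivative cost still leaves a factor $\sqrt{\eps\beta}$). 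I would resolve it by comparing $\vv$ with a competitor built from the segregated reference profile $\vv_0$, whose penalization vanishes, reached through a bridge that preserves segregation---for instance deforming each component monotonically so that the penalization along the bridge never exceeds that of its endpoints---so that the only cost is the controlled Dirichlet and time-derivative energy of the bridge. Making this bridge compatible with the boundary datum $\gz_0$ and uniform in $\beta$ is, I expect, the technical heart of the argument.
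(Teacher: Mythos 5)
Your skeleton matches the paper's: the level estimate $\FF\ebeta(\vv)\le C$ from the time-independent segregated competitor $\widetilde\vv(x,t):=\vv_0(x)$, the $L^\infty$ bound from Theorem \ref{minimizers}, and a monotone energy produced by inner variations in time, with $\EE'(t)=-2\int_\Omega|\de\vv|^2\dx$; after the scaling $\uu(x,t)=\vv(x,\eps t)$ your $\EE$ is exactly the paper's $R(t)-I(t)$. But there is a genuine gap, and it sits exactly where you place it: the uniform lower bound $\EE(t)\ge -C$. Within your framework this bound is circular: the only term in $\EE(t)$ that is not bounded below by a constant is $-\eps\int_\Omega|\de\vv(\cdot,t)|^2\dx$, and integrating your own identity gives $\EE(t)=\EE(0)-2\int_0^t\!\int_\Omega|\de\vv|^2\dx\,dt$, so a uniform lower bound on $\EE$ is essentially equivalent to \eqref{tre}, which is what you are trying to prove; monotonicity alone cannot produce it. Your proposed escape (an outer variation through a segregation-preserving ``bridge'' toward $\vv_0$) is left undeveloped, and nothing in the sketch indicates how its cost could be made uniform in $\eps$ and $\beta$. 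As written, the proof is incomplete at its central step.

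The paper closes this gap with a definitional device, not with any extra variation. It never works with the pointwise-in-time quantity $R-I$ as the primary object; instead it defines (in rescaled variables) the energy as the exponentially weighted tail of the functional, $E(t):=e^t\int_t^\infty e^{-\tau}\{I(\tau)+R(\tau)\}\,d\tau$, see \eqref{defE}. For this object the lower bound is trivial and uniform in $\beta$: the only sign-indefinite part of $I+R$ is $-2\eps\int_\Omega F(\uu)\dx$, bounded below by $-\eps M|\Omega|$ because each $F_i$ is bounded above (the penalization enters with a positive sign and only helps), so $E(t)\ge -C\eps$ with no competitor needed at all. The inner-variation computation, which you carried out correctly in spirit, then yields $E'(t)=-2I(t)$ \eqref{Eprimo}; combined with $E'=-I-R+E$ \eqref{eprimozero}, which follows by differentiating the definition of $E$, it shows a posteriori that $E=R-I$, i.e.\ that the tail integral coincides with your pointwise energy. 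Integrating $E'=-2I$ between the trivial two-sided bounds $|E|\le C\eps$ gives \eqref{derbound}, hence \eqref{tre}, and then \eqref{uno} follows by the bookkeeping you already have. So the one missing idea is to take the monotone quantity to be the weighted tail of the functional --- where the lower bound comes for free --- and to recover the pointwise energy identity as a consequence of minimality, rather than starting from the pointwise energy and hunting for its lower bound.
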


Once the existence of minimizers and the main estimates are established, we can pass to the limit as $\varepsilon \to 0$ and $\beta \to +\infty$ and prove our main result.

\begin{thm}\label{Theorem:MainResult}
Let $\UU_0$ denote either $\UU_{\vv_0,\mathbf{g}_0}$ or $\UU_{\vv_0}$. Let $\vv_{\eps,	\beta}$ be a minimizer of $\FF_{\eps,\beta}$ in $\UU_0$.
Then, for every fixed $\eps \in (0,1)$, there exist a subsequence $\beta_n \to +\infty$ and a function $\vv_{\eps} \in \UU_{seg} \cap L^{\infty}(\Omega\times(0,\infty))^k$ such that, as $n \to \infty$,

\begin{equation}\label{eq:FinalLimit1}
\vv_{\eps,\beta_n} \to \vv_\eps \quad\text{strongly in $\UU$,
 and pointwise a.e. in $\Omega \times \R^+$.}
\end{equation}
Every such function $\vv_\eps$ is an  absolute minimizer
of the functional $\FF_\eps$ defined in \eqref{defFeps} on $\UU_{seg}$, and  satisfies the same
estimates as $\vv$ in
\eqref{uno} (with $\beta =0$), \eqref{due} and \eqref{tre}.

Furthermore, there exist a sequence $\eps_n \to 0$ and a function $\ww \in \UU_{seg} \cap L^{\infty}(\Omega\times(0,\infty))^k$ such that, as $n \to \infty$,
\begin{equation}\label{eq:FinalLimit2}
\vv_{\eps_n} \rightharpoonup \ww \quad\text{weakly in $\UU$, and pointwise a.e. in
  $\Omega \times \R^+$.}
\end{equation}
Finally, $\mathbf{w} \in \mathcal{S}$, the class of functions defined in \eqref{eq:Sclass}.
\end{thm}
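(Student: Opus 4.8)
The plan is to perform the two limits in succession, extracting compactness from Theorem~\ref{ape} at each stage, and to reserve the genuinely new work for the derivation of \eqref{eq:DiffIneq} from minimality.

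\emph{First limit ($\beta\to+\infty$, $\eps$ fixed).} I would first note that, by \eqref{uno}--\eqref{tre}, the family $\{\vv_{\eps,\beta}\}_\beta$ is bounded in $H^1(\Omega_T)^k$ for every $T$, so a diagonal extraction yields $\beta_n\to+\infty$ with $\vv_{\eps,\beta_n}\rightharpoonup\vv_\eps$ weakly in $\UU$ and, by Rellich compactness, strongly in $L^2(\Omega_T)^k$ and a.e. The interaction bound in \eqref{uno} gives $\int_\tau^{\tau+T}\!\!\int_\Omega\langle\vv_{\eps,\beta_n}^2,A\vv_{\eps,\beta_n}^2\rangle\le CT/\beta_n\to0$, so that Fatou's lemma (using $a_{ij}>0$ for $i\ne j$) forces $v_{\eps,i}v_{\eps,j}=0$ a.e.; since traces pass to the limit, $\vv_\eps\in\UU_{seg}$. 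To identify $\vv_\eps$ as a minimizer of $\FF_\eps$ on $\UU_{seg}$, I would use that any competitor $\ww\in\UU_{seg}$ is segregated, hence $\langle\ww^2,A\ww^2\rangle\equiv0$ and $\FF_{\eps,\beta}(\ww)=\FF_\eps(\ww)$ for all $\beta$. Minimality of $\vv_{\eps,\beta_n}$ in $\UU_0\supseteq\UU_{seg}$, together with $\beta\langle\cdot\rangle\ge0$ and the weak lower semicontinuity of the quadratic part of $\FF_\eps$ (the term $-2F$ being continuous under a.e.\ convergence and the $L^\infty$ bound, via dominated convergence with the integrable weight $\emt/\eps$), gives
\[
\FF_\eps(\vv_\eps)\le\liminf_n\FF_\eps(\vv_{\eps,\beta_n})\le\liminf_n\FF_{\eps,\beta_n}(\vv_{\eps,\beta_n})\le\FF_\eps(\ww).
\]
Taking $\ww=\vv_\eps$ forces all inequalities to be equalities, so the quadratic part converges; combined with weak convergence this upgrades to strong convergence in $\UU$ (the weight is bounded below on each $[0,T]$). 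The estimates \eqref{uno} (with $\beta=0$), \eqref{due}, \eqref{tre} for $\vv_\eps$ then follow from lower semicontinuity and a.e.\ convergence, and remain uniform in $\eps$.

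\emph{Second limit ($\eps\to0$).} The uniform bounds produce $\eps_n\to0$ with $\vv_{\eps_n}\rightharpoonup\ww$ weakly in $\UU$ and a.e.; segregation and the initial/boundary data pass to the limit, so $\ww\in\UU_{seg}\cap L^\infty$. The substance is to show $\ww\in\mathcal S$. Since each $\vv_\eps$ minimizes $\FF_\eps$ over $\UU_{seg}$, I would test minimality against one–sided perturbations that preserve segregation. Decreasing a single component through the competitor $\max(v_{\eps,i}-s\eta,0)$ (whose support can only shrink, so segregation is kept) and letting $s\to0^+$, then removing the strictly positive weight $\emt/\eps$, yields the distributional inequality
\begin{equation}\label{eq:sub-eps}
-\eps\,\partial_{tt}v_{\eps,i}+\de v_{\eps,i}-\Delta v_{\eps,i}\le f_i(v_{\eps,i}).
\end{equation}
Dually, increasing $v_{\eps,i}$ while decreasing the competing components $v_{\eps,j}$ ($j\ne i$) so as to keep the configuration segregated produces the supersolution inequality for $\widehat v_{\eps,i}=v_{\eps,i}-\sum_{j\ne i}v_{\eps,j}$,
\begin{equation}\label{eq:super-eps}
-\eps\,\partial_{tt}\widehat v_{\eps,i}+\de\widehat v_{\eps,i}-\Delta\widehat v_{\eps,i}\ge\widehat f_i(v_\eps).
\end{equation}

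\emph{Passage to the limit in \eqref{eq:sub-eps}--\eqref{eq:super-eps} and the main obstacle.} Writing these against a test function $\eta\ge0$, the second–order contribution is $\eps_n\!\int\!\int\partial_t v_{\eps_n,i}\,\partial_t\eta$, which tends to $0$ because $\|\partial_t v_{\eps_n}\|_{L^2}$ is bounded by \eqref{tre} while $\eps_n\to0$; crucially, no second–order estimate is required. The remaining terms converge by the weak convergence of $\nabla v_{\eps_n}$ and $\de v_{\eps_n}$ and, for the reactions, by a.e.\ convergence, the $L^\infty$ bound and continuity of the $f_i$, so I recover exactly the two inequalities of \eqref{eq:DiffIneq} for $\ww$, i.e.\ $\ww\in\mathcal S$. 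I expect the two limit passages to be comparatively routine once Theorem~\ref{ape} is in hand; the delicate point, and the technical heart of the argument, is the construction of the admissible competitors at fixed $\eps$ — in particular the mass–trading perturbation behind \eqref{eq:super-eps} — together with the careful justification of the one–sided first variation across the free boundary $\{v_{\eps,i}=0\}$, where the truncation is not differentiable and the contributions supported there must be shown not to spoil the clean inequalities \eqref{eq:sub-eps}--\eqref{eq:super-eps}.
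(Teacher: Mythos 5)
Your proposal is correct, and most of it coincides with the paper's own proof: the first limit (the minimality chain over segregated competitors, segregation of the limit via Fatou's lemma and \eqref{uno}, and the upgrade from weak to strong convergence through convergence of the functional values) is exactly Lemma \ref{limbeta}, and the second limit (killing the second-order term via \eqref{tre} and $\eps_n\to0$, passing the remaining terms to the limit by weak convergence and dominated convergence) is exactly the paper's final argument. The one genuinely different step is your derivation of the subsolution inequalities at fixed $\eps$: you obtain them by a one-sided variation of the \emph{constrained} minimizer $\vv_\eps$ within $\UUseg$, using the truncated competitor $(v_\eps^i-s\eta)^+$, whereas the paper (Lemma \ref{disug}, inequality \eqref{disugset1}) goes back to the approximating minimizers $\vv_{\eps,\beta_n}$, which are \emph{unconstrained} minimizers on $\UU_0$, writes their exact Euler--Lagrange identities, discards the nonnegative interaction term, and passes to the limit $n\to\infty$ using only weak $H^1$ convergence. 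Your route does work, and it is closer in spirit to the elliptic argument of Conti--Terracini--Verzini: the truncation corrections have a favorable sign, since $\nabla(v_\eps^i-s\eta)^+$ vanishes a.e.\ where the truncation is active, so the gradient and time-derivative terms of the competitor are bounded above by those of the untruncated function $v_\eps^i-s\eta$, while the reaction correction is $o(s)$ because the measure of $\{0<v_\eps^i\le s\eta\}\cap\operatorname{supp}\eta$ tends to zero. The trade-off: the paper's detour through the $\beta$-level identity avoids the free-boundary/truncation analysis altogether for the subsolution half (dropping a positive term from an exact identity needs no justification), at the price of carrying the approximating sequence along; your argument is self-contained at the $\eps$ level but must actually perform the analysis you flag as ``the technical heart'' --- which, in the paper's organization, only arises for the supersolution half. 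There, your mass-trading competitor is the same as the paper's, and no sign-dropping is needed at all, because the positive and negative parts of $\widehat v_\eps^i+\delta\psi$ recombine exactly into $|\nabla(\widehat v_\eps^i+\delta\psi)|^2$. One small point of wording: ``removing the strictly positive weight'' should be implemented as the substitution $\eta\mapsto e^{t/\eps}\eta$ in the test function; it is this substitution that generates the first-order term $\de v\,\eta$, which your displayed inequalities do correctly contain.
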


\begin{remark} Under additional assumptions on the functions $F_i$ (for instance, if each $F_i$ is concave), it can be proved that there is no need to pass to  subsequences in \eqref{eq:FinalLimit1} and \eqref{eq:FinalLimit2} (see Remark \ref{Rem:MinFeps} and Remark \ref{Rem:UniqLimEps}). In other words, the limit $\ww \in \mathcal{S}$ is obtained as

\[
\ww = \lim_{\eps \to 0^+} \Big(\lim_{\beta \to +\infty} \vv_{\eps,\beta}\Big),
\]
where the double limit is  in the weak topology of $\UU$.
\end{remark}

Finally,  minimization of $\FF_{\eps,\beta}$ on $\UU_{\mathbf{g}_0}$ is treated in Section \ref{SectionDiscussions}, where we show (Theorem \ref{elliptic}) that we recover the results of the elliptic setting.

\section{Existence of minimizers}\label{SectionExistenceofMinimizers}
For every $\eps,\beta > 0$, it is convenient to introduce the rescaled functional

\begin{equation}\label{dgrv}
\JJ_{\eps,\beta}(\uu) := \int_0^\infty\!\!\int_{\Omega} e^{-t} \Big\{ |\de \uu|^2 + \eps \left(|\nabla \uu|^2 - 2F(\uu) + \frac{\beta}{2} \langle \uu^2, A \uu^2 \rangle\right)  \Big\} \,dxdt,
\end{equation}
which is equivalent to $\FF_{\eps,\beta}$, as defined in \eqref{eq:DeGiorgiFunctional}, in the sense that

\[
\FF_{\eps,\beta}(\vv) = \frac{1}{\eps} \JJ_{\eps,\beta}(\uu)
\]
whenever $\uu$ and $ \vv$ are related by the change of variable $\uu(x,t) = \vv(x,\eps t)$.
Since the convex sets $\UU_{\vv_0,\mathbf{g}_0}$, $\UU_{\vv_0}$ and $\UU_{\mathbf{g}_0}$
are invariant under this time scaling,
the
minimization of $\FF_{\eps,\beta}$ on each of these sets is equivalent to the minimization of $\JJ_{\eps,\beta}$ on the same set.
Working with $\JJ_{\eps,\beta}$, however, will turn out to be convenient
 in order to simplify the exposition, especially for what concerns
 the a priori estimates on minimizers. Now we state and prove the following result, which is easily seen to be equivalent to Theorem \ref{minimizers}.

\begin{thm}
\label{Theorem:ExistenceMinimizers}
Let $\UU_0$ denote one of $\UU_{\vv_0,\mathbf{g}_0}$, $\UU_{\vv_0}$ or $\UU_{\mathbf{g}_0}$.
For every $\eps \in(0,1)$ and $\beta > 0$, the functional $\JJ_{\eps,\beta}$ has an absolute minimizer  in $\UU_0$. Moreover, every minimizer
 $\uu=(u_1,\ldots,u_k)$ satisfies $0 \le u_i \le 1$ a.e. in $\Omega \times \R^+$, for every $i=1,\dots,k$.
\end{thm}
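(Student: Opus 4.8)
The plan is to apply the direct method of the Calculus of Variations to $\JJ\ebeta$, exploiting a single truncation operation to obtain \emph{simultaneously} the compactness of minimizing sequences, the weak lower semicontinuity of the potential term, and the pointwise bound $0\le u_i\le 1$.

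First I would check that the infimum is a finite real number. Since $a_{ij}\ge 0$ and each $F_i$ is bounded above by \eqref{eq:AssumptionsPotential}, the integrand of $\JJ\ebeta$ is bounded below by $-2\eps F(\uu)\ge -C$, so $\JJ\ebeta\ge -C\int_0^\infty\!\int_\Omega e^{-t}\dx\dt>-\infty$ on all of $\UU$. For an upper bound on the infimum I would test with the time-independent competitor $\widetilde\uu(x,t):=\vv_0(x)$, which lies in each of the three sets by \eqref{nonvuoto} and is invariant under the time scaling; crucially, the segregation condition \eqref{eq:SegregationCondition} makes $\langle\vv_0^2,A\vv_0^2\rangle=0$, so $\JJ\ebeta(\widetilde\uu)=\eps\int_\Omega\bigl(|\nabla\vv_0|^2-2F(\vv_0)\bigr)\dx<+\infty$ because $\vv_0\in H^1(\Omega)^k$ and $0\le\vv_0\le 1$.

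The key observation is the monotonicity of $\JJ\ebeta$ under the truncation $T(s):=\min(\max(s,0),1)$, applied componentwise. Replacing $\uu$ by $T(\uu)$ does not increase any of the four terms: $|\de T(u_i)|\le|\de u_i|$ and $|\nabla T(u_i)|\le|\nabla u_i|$ a.e. (since $\nabla T(u_i)=T'(u_i)\nabla u_i$ with $0\le T'\le 1$); the bound $T(u_i)^2\le u_i^2$ together with $a_{ij}\ge 0$ controls the interaction term; and $F_i(T(u_i))\ge F_i(u_i)$ by the monotonicity of $F_i$ in \eqref{eq:AssumptionsPotential} (non-decreasing on $(-\infty,0)$, non-increasing on $(1,+\infty)$), which makes $-2\eps F$ non-increasing under $T$. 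Moreover $T(\uu)$ still lies in $\UU_0$, since $\vv_0$ and its trace $\gz_0$ take values in $[0,1]$ and $T$ commutes with the trace. Hence, given any minimizing sequence, I may replace it by its truncation and assume $0\le u_i^{(n)}\le 1$ throughout. On each slab $\Omega_T$ the weight satisfies $e^{-t}\ge e^{-T}$, so the energy bound controls $\int_{\Omega_T}\bigl(|\de\uu^{(n)}|^2+|\nabla\uu^{(n)}|^2\bigr)$, while the uniform bound $0\le u_i^{(n)}\le 1$ controls the $L^2(\Omega_T)$ norm; thus $\{\uu^{(n)}\}$ is bounded in $H^1(\Omega_T)^k$ for every $T$. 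A diagonal argument over $T=1,2,\dots$ yields a subsequence converging weakly in each $H^1(\Omega_T)^k$ to some $\uu\in\UU$, strongly in $L^2(\Omega_T)^k$ and a.e. by compact embedding; since $\UU_0$ is convex and closed, hence weakly closed, $\uu\in\UU_0$ and $0\le u_i\le 1$ a.e. For lower semicontinuity, the convex quadratic terms are weakly lower semicontinuous on each $\Omega_T$ (and one passes to $(0,\infty)$ letting $T\to\infty$), whereas for the interaction and potential terms the a.e. convergence together with the dominations $e^{-t}\langle(\uu^{(n)})^2,A(\uu^{(n)})^2\rangle\le Ce^{-t}$ and $e^{-t}|F(\uu^{(n)})|\le Ce^{-t}$ (valid since $0\le u_i^{(n)}\le 1$) yields \emph{full} convergence by dominated convergence. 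Combining, $\liminf_n\JJ\ebeta(\uu^{(n)})\ge\JJ\ebeta(\uu)$, so $\uu$ is a minimizer.

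Finally, the pointwise bounds for an \emph{arbitrary} minimizer require a sharper one-sided version of the truncation. To show $u_i\ge 0$ I would compare $\uu$ with the competitor obtained by replacing $u_i$ by $\max(u_i,0)$: as above this does not increase the energy, so, $\uu$ being a minimizer, equality must hold separately in each of the four (individually non-increasing) terms; in particular the time-derivative term forces $\int_0^\infty\!\int_\Omega e^{-t}|\de u_i^-|^2\dx\dt=0$ with $u_i^-:=\max(-u_i,0)$, whence $\de u_i^-=0$ a.e., and since the initial trace satisfies $u_i^-(\cdot,0)=\max(-v_{0i},0)=0$ we conclude $u_i^-\equiv 0$. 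The bound $u_i\le 1$ follows symmetrically using $\min(u_i,1)$, $(u_i-1)^+:=\max(u_i-1,0)$ and $F_i$ non-increasing on $(1,+\infty)$. For the set $\UU_{\gz_0}$, which carries no initial datum, the same equalities instead force $\nabla u_i^-=0$, and the Dirichlet condition $u_i^-=0$ on $\partial\Omega$ gives $u_i^-\equiv 0$ by Poincar\'e's inequality. The step I expect to be most delicate is precisely organizing the direct method so that the degenerate weight $e^{-t}$ and the unbounded time interval spoil neither compactness nor semicontinuity, and matching each one-sided truncation with the correct (initial or boundary) constraint needed to promote the energy inequality into the pointwise bound.
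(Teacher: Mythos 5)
Your proposal is correct, and its overall architecture is the same as the paper's: the direct method for $\JJ\ebeta$, componentwise truncation $\min\{1,\max\{\cdot,0\}\}$ of a minimizing sequence (legitimate by the monotonicity hypotheses \eqref{eq:AssumptionsPotential} and by \eqref{bound}), coercivity on each slab $\Omega_T$ with a diagonal extraction, weak closedness of the convex set $\UU_0$, dominated convergence for the potential and interaction terms, and weak lower semicontinuity for the two derivative terms.

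The one point where you genuinely diverge is the bound $0\le u_i\le 1$ for an \emph{arbitrary} minimizer. The paper settles this in one line: truncating a minimizer that violates the bound would strictly decrease $\JJ\ebeta$, ``the strict inequality coming from the terms involving derivatives.'' You instead allow for the possibility that all four (individually non-increasing) terms remain \emph{equal} under truncation, deduce from the equality of the time-derivative term that the excess part $u_i^-$ (or $(u_i-1)^+$) satisfies $\de u_i^-=0$ a.e., and then eliminate it via the initial trace $u_i^-(\cdot,0)=0$; for $\UU_{\gz_0}$ you use instead $\nabla u_i^-=0$ together with the Dirichlet datum. This is in fact the more complete argument: the strict inequality invoked by the paper can fail precisely when $\de u_i$ and $\nabla u_i$ vanish a.e. on the set where the bound is violated (the potential term need not help either, since \eqref{eq:AssumptionsPotential} allows $F_i$ to be constant on $(-\infty,0]$ and on $[1,+\infty)$, and the interaction term vanishes where the other components do), and in that degenerate case it is exactly the rigidity-plus-boundary-data reasoning you spell out that closes the argument. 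So your version supplies the justification that the paper's parenthetical leaves implicit; I see no gaps.
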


\begin{proof} First note that $\UU_0\neq\emptyset$ by \eqref{nonvuoto}
and that  $\JJ_{\eps,\beta}(\widetilde\vv)<+\infty$, so that the minimization is nontrivial.
Thanks to the assumptions \eqref{eq:AssumptionsPotential} on  $F$, the quantity
\begin{equation}
\label{F0}
M:= 2\sum_{i=1}^k \max_{v \in \R}\;  F_i(v)
\end{equation}
is finite and therefore, since the entries of the matrix $A$ are nonnegative,
for every $\uu \in \UU_0$ we have

\begin{equation}
\label{coerc}
\JJ\ebeta(\uu) \ge    \int_0^{\infty}  e^{-t}\int_{\Omega}
\left\{|\de \uu|^2 + \eps |\nabla \uu|^2\right\}\,  dxdt - \eps M|\Omega|,
\end{equation}
which in particular shows that $\JJ\ebeta$ is bounded from below on $\UU_0$.

Now let $\{\uu_n\} \subset \UU_0$ be a minimizing sequence, i.e.
\[
\JJ_{\eps,\beta}(\uu_n)\to \inf_{\uu \in \UU_0}\JJ_{\eps,\beta}(\uu).
\]
Due to the monotonicity assumptions
\eqref{eq:AssumptionsPotential} on $F_i$,
by replacing $\uu_n$ with
$\min\{1,\max\{\uu_n,0\}\}$ (where the truncations are meant componentwise)
we may assume that the minimizing sequence $\uu_n$ satisfies
\begin{equation}
\label{un}
0\le \uu_n \le 1
\end{equation}
(note that, after these truncations, we still have $\uu_n\in \UU_0$
by virtue of
\eqref{bound}).

We then see by \eqref{coerc} that, for every $T>0$,
there is a constant $C_T$ such that
\begin{equation}
\label{stimader}
\int_0^T  \int_{\Omega}
\left\{|\de \uu_n|^2 + |\nabla \uu_n|^2\right\}\,  dxdt\leq C_T.
\end{equation}
This inequality, combined with the $L^\infty$ bound \eqref{un},
reveals
that
$\{\uu_n\}$ is bounded in $\UU$,
and hence there exist a subsequence (still denoted $\uu_n$) and a function
$\uu \in \UU$ such that

\[
\text{$\uu_n \rightharpoonup \uu\,\,$ weakly in $\,\,\UU,\quad$ and
$\quad \uu_n \to \uu\,\,$  a.e. in $\,\Omega \times \R^+$.}
\]
Since  $\UU_0$ is closed and convex in $\UU$ and $\uu_n\in\UU_0$, we also have
$\uu\in\UU_0$.
Moreover  $0 \le \uu \le 1$ by \eqref{un} and,    by dominated convergence,
\begin{align*}
 \int_0^\infty \!\!\!\int_\Omega e^{-t} F(\uu_n)\,dxdt &\to \int_0^\infty
 \!\!\! \int_\Omega e^{-t} F(\uu)\,dxdt, \\
\int_0^\infty \!\!\!\int_\Omega e^{-t} \,\langle\uu_n^2, A\uu_n^2 \rangle\, dxdt
&\to \int_0^\infty\!\!\!  \int_\Omega e^{-t}\,\langle\uu^2, A\uu^2 \rangle \,dxdt.
\end{align*}
Since the terms involving derivatives in \eqref{dgrv} are
weakly lower semicontinuous, we deduce that
\[
\JJ\ebeta(\uu) \le \liminf_n \JJ\ebeta(\uu_n) = \inf_{\vv \in \UU_0} \JJ\ebeta(\vv),
\]
i.e. $\uu$ is an absolute minimizer of $\JJ$ on $\UU_0$. Finally, any minimizer $\uu$
must satisfy $0\leq \uu\leq 1$ almost everywhere, otherwise letting
$\vv=\min\{1,\max\{\uu,0\}\}$ we would have
$\JJ_{\eps,\beta}(\vv)<\JJ_{\eps,\beta}(\uu)$ (the strict inequality coming from the terms involving
derivatives).
\end{proof}

\section{Uniform estimates}\label{SectionEstimates}

This section is devoted to the proof of the main estimates on the  minimizers of
$\FF_{\eps,\beta}$.
Denoting by $\UU_0$ one of the convex sets
 $\UU_{\vv_0,\mathbf{g}_0}$ or $\UU_{\vv_0}$,
the main goal of this section is to prove Theorem \ref{ape}. The proof will be carried out in a series of lemmas, working
with the rescaled functional $\JJ_{\eps,\beta}$
defined in \eqref{dgrv}. Finally,
we will transfer the estimates to the minimizers of $\FF\ebeta$ by scaling the time.

\begin{lemma}\label{Lemma:LevelEstimate} (Level estimate)

Let $\uu$ be a minimizer of $\JJ\ebeta$ in $\UU_0$. Then

\[
|\JJ\ebeta(\uu)| \leq C \eps,
\]
for some constant $C$ independent of $\eps$ and $\beta$.
\end{lemma}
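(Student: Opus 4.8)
The plan is to establish the bound $|\JJ_{\eps,\beta}(\uu)| \le C\eps$ by comparing the minimal value with the value of the functional on a suitable competitor, and then using the lower bound \eqref{coerc} from the previous section. Since $\uu$ is a minimizer, for \emph{any} competitor $\ww \in \UU_0$ we have $\JJ_{\eps,\beta}(\uu) \le \JJ_{\eps,\beta}(\ww)$, so an upper bound on the minimal value reduces to choosing $\ww$ cleverly and estimating $\JJ_{\eps,\beta}(\ww)$. The natural candidate is the time-independent extension $\widetilde\vv(x,t) := \vv_0(x)$, which lies in $\UU_0$ by \eqref{nonvuoto}. For this competitor the term $|\de\widetilde\vv|^2$ vanishes identically, and the segregation condition \eqref{eq:SegregationCondition} on $\vv_0$ forces $\langle\vv_0^2, A\vv_0^2\rangle = \sum_{i,j} a_{ij} v_{0i}^2 v_{0j}^2 = 0$ a.e. (each summand either has $i=j$, where $a_{ii}=0$, or $i\neq j$, where $v_{0i} v_{0j}=0$). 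Hence the entire $\beta$-dependent term disappears, which is precisely what makes the bound uniform in $\beta$.

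With this choice, using $\int_0^\infty e^{-t}\dt = 1$, I would compute
\[
\JJ_{\eps,\beta}(\widetilde\vv) = \eps \int_\Omega \left\{ |\nabla\vv_0|^2 - 2F(\vv_0) \right\}\dx \le \eps \int_\Omega |\nabla\vv_0|^2 \dx + \eps M |\Omega| =: C_1 \eps,
\]
where I used that $F_i \ge F_i(0) = 0$ is not guaranteed, so instead I bound $-2F(\vv_0)$ from above by $M$ as in \eqref{F0}, and that $\vv_0 \in H^1(\Omega)^k$ makes $\int_\Omega|\nabla\vv_0|^2\dx$ finite. This yields the upper estimate $\JJ_{\eps,\beta}(\uu) \le C_1\eps$. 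For the lower estimate, I would invoke \eqref{coerc}, which gives
\[
\JJ_{\eps,\beta}(\uu) \ge \int_0^\infty e^{-t}\int_\Omega \left\{ |\de\uu|^2 + \eps|\nabla\uu|^2 \right\}\dx\dt - \eps M|\Omega| \ge -\eps M|\Omega| =: -C_2 \eps,
\]
since both integrands are nonnegative. Combining the two bounds gives $-C_2\eps \le \JJ_{\eps,\beta}(\uu) \le C_1\eps$, hence $|\JJ_{\eps,\beta}(\uu)| \le C\eps$ with $C := \max\{C_1, C_2\}$ independent of $\eps$ and $\beta$.

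I do not expect a serious obstacle here; the lemma is genuinely elementary once the right competitor is identified. The only point requiring a little care is making sure the competitor's energy scales like $\eps$ and not like a constant: this is exactly why one works with the rescaled functional $\JJ_{\eps,\beta}$ rather than $\FF_{\eps,\beta}$, since in $\JJ_{\eps,\beta}$ the spatial gradient, potential, and interaction terms all carry an explicit factor of $\eps$, while the time-derivative term (which would otherwise be $O(1)$) is killed by the stationary choice of competitor. The key structural observations are thus (i) that $\widetilde\vv$ annihilates both $|\de\cdot|^2$ and the interaction term, the latter by segregation of $\vv_0$, and (ii) that the remaining $O(\eps)$ terms are controlled uniformly using only $\vv_0 \in H^1(\Omega)^k$ and the upper bound $M$ on the potentials.
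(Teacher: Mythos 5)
Your proposal is correct and follows essentially the same argument as the paper: the lower bound via the coercivity estimate \eqref{coerc} (dropping the nonnegative terms), and the upper bound via the time-independent, segregated competitor $\widetilde\vv(x,t)=\vv_0(x)$, which annihilates both $|\de\cdot|^2$ and the interaction term. The only cosmetic difference is that you bound $-2F(\vv_0)$ by $M$ while the paper keeps $\int_\Omega\{|\nabla\vv_0|^2-2F(\vv_0)\}\,dx$ as its constant; both yield a constant independent of $\eps$ and $\beta$.
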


\begin{proof} Recalling \eqref{F0} and procceding as for \eqref{coerc},
neglecting the positive terms  we obtain
\[
\JJ\ebeta(\uu) \ge -  M |\Omega| \eps.
\]
On the other hand, the competitor function $\widetilde\vv$
defined in \eqref{nonvuoto}
satisfies  both $\de \widetilde\vv \equiv 0$ and
$\langle \widetilde\vv^2,A\widetilde\vv^2 \rangle \equiv 0$ thanks to the segregation assumption on the initial data \eqref{eq:SegregationCondition}. Consequently,

\[
\JJ\ebeta(\uu) \leq \JJ\ebeta(\widetilde \vv)  = \eps \int_0^{\infty} e^{-t}\!\!
\int_{\Omega} \left\{|\nabla \widetilde\vv|^2 - 2F(\widetilde\vv)\right\}\,dxdt = \eps
\int_{\Omega} \left\{|\nabla \vv_0|^2 - 2 F(\vv_0)\right\}\,dx  = C\eps,
\]
and the claim follows.
\end{proof}

We are now ready to prove the main technical result of the present work. To proceed further we must introduce some  notation that will be used in the statement the next lemma and in its proof.

If $\uu$ is a minimizer of $\JJ\ebeta$ in $\UU_0$, we  define
the time-dependent quantities

\begin{equation}\label{defRI}
R(t) = \eps \int_{\Omega} \left\{ |\nabla \uu|^2 - 2 F(\uu)
+ \frac{\beta}{2} \langle \uu^2, A \uu^2 \rangle
 \right\} \,dx,\quad
I(t) =   \int_{\Omega} |\de\uu|^2 dx,
\end{equation}
which enable us to write the functional in \eqref{dgrv} as
\begin{equation}
  \label{rewriteJJ}
  \JJ\ebeta(\uu)=\int_0^\infty e^{-t}\bigl\{ I(t)+R(t)\bigr\}\,dt.
\end{equation}
We also define the ``energy function''
\begin{equation}\label{defE}
E(t):=e^t\int_t^{\infty} e^{-\tau} \bigl\{ I(\tau)+R(\tau)\bigr\} \,d\tau, \quad t\geq 0,
\end{equation}
and observe that $E(0) =   \JJ\ebeta(\uu)$ and
\begin{equation}\label{eprimozero}
E'(t)=-I(t)-R(t)+E(t),\quad\text{for a.e. $t>0$}.
\end{equation}
Adapting the ideas of the proof of Proposition 1.3 of \cite{SerraTilli2012:art},
we will now compute this derivative in a different way. From this key computation, the estimates stated in Theorem \ref{ape} will follow quite easily.

\begin{lemma}\label{Lemma:ENERGYDERIVATIVE}
Let $\uu$ be a minimizer of $\JJ\ebeta$ in $\UU_0$. Then
\begin{equation}\label{Eprimo}
E'(t) = - 2I(t) \quad \text{ for a.e. $t\geq 0$.}
\end{equation}
\end{lemma}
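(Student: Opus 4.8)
The plan is to exploit the minimality of $\uu$ under \emph{inner variations}, i.e. reparametrizations of the time variable, rather than the usual outer variations that produced \eqref{strong}. The point is that $\JJ\ebeta$ carries the explicit weight $e^{-t}$, and rescaling time trades this weight against the Jacobian of the reparametrization; after imposing minimality, the resulting Noether-type identity is precisely the relation between $E$, $I$ and $R$ that is equivalent to \eqref{Eprimo}. Concretely, I would fix $\xi\in C^1([0,\infty))$ that is bounded together with $\xi'$ and satisfies $\xi(0)=0$, and consider
\[
\uu_\lambda(x,t):=\uu\bigl(x,\,t+\lambda\xi(t)\bigr),\qquad |\lambda|\ \text{small.}
\]
For $|\lambda|$ small the map $t\mapsto t+\lambda\xi(t)$ is a bi-Lipschitz diffeomorphism of $[0,\infty)$ fixing the origin, so $\uu_\lambda\in\UU$; since $\xi(0)=0$ and the boundary datum $\mathbf{g}_0$ is time-independent, $\uu_\lambda$ has the same initial and boundary traces as $\uu$, hence $\uu_\lambda\in\UU_0$ for both admissible choices of $\UU_0$. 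As $\pm\lambda$ are both admissible, minimality gives $\frac{d}{d\lambda}\JJ\ebeta(\uu_\lambda)\big|_{\lambda=0}=0$.

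Next I would compute this derivative. Writing $\JJ\ebeta(\uu_\lambda)$ through \eqref{defRI}--\eqref{rewriteJJ} and performing the change of variable $\sigma=t+\lambda\xi(t)$, the $\lambda$-dependence is moved entirely onto the weight $e^{-(t+\lambda\xi)}$ and the Jacobian $1+\lambda\xi'$, so that $I$ and $R$ are evaluated at $\sigma$ with \emph{no} $\lambda$ inside; this is exactly the device that avoids differentiating the merely integrable functions $I,R$. Differentiating under the integral at $\lambda=0$ (justified by $0\le\uu\le1$ and $\JJ\ebeta(\uu)<\infty$, which provide integrable domination) I expect to obtain the weak identity
\[
\int_0^\infty e^{-t}\Bigl[\xi(t)\,\bigl(I(t)+R(t)\bigr)+\xi'(t)\,\bigl(I(t)-R(t)\bigr)\Bigr]\dt=0
\]
for every admissible $\xi$.

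Finally I would bring in the energy $E$. Setting $Q(t):=e^{-t}E(t)=\int_t^\infty e^{-\tau}(I+R)\dtau$, one has $Q'=-e^{-t}(I+R)$ a.e., so an integration by parts (the boundary terms vanishing because $\xi(0)=0$ and, since $\JJ\ebeta(\uu)<\infty$ forces $Q(t)\to0$ with $\xi$ bounded) turns $\int_0^\infty e^{-t}\xi(I+R)\dt$ into $\int_0^\infty \xi'\,e^{-t}E\,\dt$. The displayed identity then reads
\[
\int_0^\infty e^{-t}\,\xi'(t)\,\bigl[E(t)+I(t)-R(t)\bigr]\dt=0 .
\]
Choosing $\xi(t)=\int_0^t\phi$ with $\phi\in C_c^\infty((0,\infty))$ arbitrary (an admissible $\xi$), the factor $\xi'=\phi$ ranges over all of $C_c^\infty((0,\infty))$, whence $E=R-I$ a.e. Combined with the elementary identity $E'=E-I-R$ from \eqref{eprimozero}, this yields $E'=(R-I)-(I+R)=-2I$, which is \eqref{Eprimo}.

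The main obstacle I anticipate is analytic rather than conceptual: justifying that $\lambda\mapsto\JJ\ebeta(\uu_\lambda)$ is differentiable at $0$ with the derivative computed by passing the limit inside the integral, given that $I(t)$ and $R(t)$ are only known to be integrable against $e^{-t}$ and need not be differentiable in $t$. Performing the change of variables \emph{before} differentiating, so that the $\lambda$-dependence sits only in the smooth scalar factors $e^{-(t+\lambda\xi)}$ and $1+\lambda\xi'$, is precisely what circumvents this difficulty; the remaining care is the control of the boundary contribution at $t=+\infty$, handled through the decay of $Q$ guaranteed by the finiteness of $\JJ\ebeta(\uu)$.
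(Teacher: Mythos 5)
Your proof is correct and takes essentially the same route as the paper: an inner variation by time reparametrization, with the change of variables performed \emph{before} differentiating so that the parameter enters only through the smooth factors (weight and Jacobian), yielding the weak identity that amounts to $E = R - I$, which combined with \eqref{eprimozero} gives \eqref{Eprimo}. The only minor difference is the localization step --- the paper substitutes ramp-type functions $\zeta$ and passes to the limit via Lebesgue differentiation, whereas you integrate by parts against $Q(t)=e^{-t}E(t)$ and invoke the fundamental lemma of the calculus of variations --- but these are interchangeable technical devices, and your handling of the boundary term at infinity (decay of $Q$ from the finiteness of $\JJ\ebeta(\uu)$) is sound.
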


\begin{proof}
Given  an arbitrary test function $\eta \in C_0^{\infty}(\mathbb{R}^+)$, we define

\begin{equation}
\label{zeta}
\zeta(t) := \int_0^t\eta(\tau)d\tau \qquad\text{and}\qquad  \varphi(t) := t - \delta \zeta(t),\qquad
t\geq 0,
\end{equation}
where $\delta$ is a small parameter.
Clearly, $\varphi(0)= 0$ and $\varphi'(t) >0$ if $|\delta|$ is small enough, so that
$\varphi$ is a smooth diffeomorphism of $[0,+\infty)$.  Moreover, from the definition of $\varphi$ we see that its inverse $\psi := \varphi^{-1}$ satisfies
\begin{equation}
\label{eq:EQUATIONFORPSI}
\psi(\tau) = \tau + \delta \zeta(\psi(\tau)), \quad \tau \geq 0.
\end{equation}
Now we use this diffeomorphism to perform an inner
variation of $\uu$,
by means of the competitor

\[
\ww(x,t) := \uu(x,\varphi(t))
\]
(note that $\ww\in\UU_0$, since $\ww(x,0)=\uu(x,0)$ while the boundary condition
along $\partial\Omega\times\R^+$, if prescribed as in \eqref{eq:DFSpace} or in \eqref{eq:EFSpace}, is maintained because $\mathbf{g}_0$ is time-independent). The dependence of $w$ on $\delta$, though crucial, is omitted
for notational simplicity (observe, however, that $\ww\equiv \uu$ when $\delta=0$).
Recalling \eqref{rewriteJJ}, we have

\[
\JJ\ebeta(\ww) = \int_0^\infty\!\! e^{-t}\left\{  \varphi'(t)^2
I\bigl(\varphi(t)\bigr)+ R\bigl(\varphi(t)\bigr)\right\}
\,dt
\]
and, by the change of variable  $t=\psi(\tau)$, since $\varphi(t)=\tau$ we obtain

\begin{align*}
\JJ\ebeta(\ww)  = \int_0^\infty\!\!
\psi'(\tau)e^{-\psi(\tau)} \Big\{ \varphi'(\psi(\tau))^2
I(\tau)+R(\tau)
\Big\} \,d\tau.
\end{align*}
Using the bound $e^{-\psi(\tau)} \leq  e^{\delta\|\zeta\|_{\infty}}e^{-\tau}$,
which follows
from \eqref{eq:EQUATIONFORPSI},
and the fact that $\psi'\in L^\infty(\R^+)$, we see that
$\JJ\ebeta(\ww)$ is finite.
In fact, since $\uu$ is a minimizer
and $\ww \equiv \uu$ when $\delta = 0$, we must have

\begin{equation}\label{var}
\frac{d}{d\delta}\JJ\ebeta(\ww) \Big|_{\delta = 0} = 0
\end{equation}
(the existence of the derivative will be clear, by dominated convergence,
in the light of the computations that follow).
Following the proof of \cite[Proposition 3.1]{SerraTilli2012:art},
(see also \cite[Lemma 4.5]{BogeleinEtAl2014:art}), differentiating \eqref{eq:EQUATIONFORPSI} with respect to  $\delta$ yields

\[
\frac{\partial}{\partial\delta} \psi(\tau) = \zeta(\psi(\tau)) + \delta\zeta'(\psi(\tau)) \frac{\partial}{\partial\delta}\psi(\tau) \]
so that
\[
\frac{\partial}{\partial\delta} \psi(\tau) \Big|_{\delta = 0} = \zeta(\tau),
\]
where we have used that $\psi(\tau)|_{\delta = 0} = \tau$. Similarly,

\[
\frac{\partial}{\partial\delta} \psi'(\tau) = \zeta'(\psi(\tau))\psi'(\tau) + \delta\Big(\zeta''(\psi(\tau))\psi'(\tau)\frac{\partial}{\partial\delta}\psi(\tau) + \zeta'(\psi(\tau)) \frac{\partial}{\partial\delta}\psi'(\tau)\Big),
\]
from which we obtain

\[
\frac{\partial}{\partial\delta} \psi'(\tau)\Big|_{\delta = 0} = \zeta'(\tau),
\]
and thus also

\[
\frac{\partial}{\partial\delta}\Big(\psi'(\tau)e^{-\psi(\tau)}\Big)\Big|_{\delta = 0} = \zeta'(\tau)e^{-\tau} - \zeta(\tau)e^{-\tau}.
\]
Finally, we compute

\[
\frac{\partial}{\partial\delta} |\varphi'(\psi(\tau))|^2 = 2 \varphi'(\psi(\tau)) \frac{\partial}{\partial\delta} \varphi'(\psi(\tau))
\]
to conclude that
\[
\frac{\partial}{\partial\delta} |\varphi'(\psi(\tau))|^2 \Big|_{\delta = 0} = -2\zeta'(\tau),
\]
where we have used the expression of the derivative $\varphi'(\psi(\tau)) =  1 -\delta \zeta'(\psi(\tau))$. Consequently, we can make \eqref{var} more explicit, namely

\begin{align}
\label{start}
0 & = \frac{d}{d\delta}\JJ\ebeta(\ww) \Big|_{\delta = 0}
 = \int_0^{\infty} \Big(\zeta(\tau)e^{-\tau}  \Big)'
 \Big\{  I(\tau)+R(\tau)  \Big\}   \,d\tau
 -2 \int_0^{\infty}  e^{-\tau}  \zeta'(\tau)I(\tau)\,d\tau.
\end{align}
Now, recalling \eqref{zeta}, a standard smoothing argument shows that
in this identity one can choose
\[
\zeta(\tau):=\begin{cases}
  0 & \text{if $\tau \leq t$,}\\
  e^t(\tau-t)/\lambda & \text{if $t<\tau<t+\lambda$,}\\
  e^t & \text{if $\tau \geq t+\lambda$,}
\end{cases}
\]
where $t>0$ is fixed while $\lambda>0$ is a small parameter. Letting $\lambda\to 0^+$,
one finds for a.e. $t>0$
\[
0=I(t)+   R(t)
-e^t\int_t^{\infty} e^{-\tau}
 \big\{ I(\tau)+  R(\tau) \big\} \,d\tau
-2I(t),
\]
i.e. $I+R-E=2I$. Recalling \eqref{eprimozero}, \eqref{Eprimo} is established.
\end{proof}

Now we state and prove two important corollaries of the above lemma.

\begin{cor}
Let $\uu $ be a minimizer of $\JJ\ebeta$ in $\UU_0$. Then
\begin{align}
|E(t)| &\leq C \eps \quad \forall t \geq 0, \label{enbound}\\
\int_0^{\infty} I(\tau) \,d\tau & \leq C \eps,\label{derbound}
\end{align}
for some constant $C$ independent of $\eps$ and $\beta$.
\end{cor}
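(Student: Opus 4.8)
The plan is to exploit the two identities for $E'(t)$ already at our disposal. From the definition \eqref{defE} and the relation \eqref{eprimozero} we have $E'=-I-R+E$, while Lemma \ref{Lemma:ENERGYDERIVATIVE} gives $E'=-2I$. Subtracting these two expressions yields the pointwise identity
\begin{equation}\label{eq:ERel}
E(t)=I(t)-R(t)\qquad\text{for a.e. $t>0$,}
\end{equation}
which will be the workhorse for both bounds. Note also that, by definition, $E(0)=\JJ\ebeta(\uu)$, so the Level estimate (Lemma \ref{Lemma:LevelEstimate}) already gives $|E(0)|\le C\eps$; the goal is to propagate this bound to all $t\ge0$ and then to integrate $I$.

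To prove \eqref{enbound}, first I would integrate the clean ODE $E'=-2I$. Since $I(t)\ge 0$ for all $t$ (it is an integral of a square), $E$ is non-increasing, so $E(t)\le E(0)\le C\eps$ for every $t\ge0$. This takes care of the upper bound directly. For the lower bound I would use \eqref{eq:ERel}: since the segregation-type interaction term and the gradient term in $R$ are nonnegative while $-2F$ is bounded below (recall $F\le M/2$ componentwise, cf. \eqref{F0}), one has $R(t)\ge -\eps M|\Omega|$, hence $E(t)=I(t)-R(t)\ge -R(t)\ge -\eps M|\Omega|$ once we discard the nonnegative $I(t)$. Combining the two estimates gives $-C\eps\le E(t)\le C\eps$, which is exactly \eqref{enbound} after relabelling the constant. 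The only mild subtlety here is that \eqref{eq:ERel} holds only a.e., but since $E$ is (locally) absolutely continuous by \eqref{defE}, the bound extends to every $t\ge0$ by continuity.

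For \eqref{derbound} I would integrate $E'=-2I$ over $(0,\infty)$. This gives
\[
2\int_0^\infty I(\tau)\,d\tau = -\int_0^\infty E'(\tau)\,d\tau = E(0)-\lim_{t\to\infty}E(t).
\]
Since $E$ is non-increasing and bounded below (by the lower bound just established), the limit $\lim_{t\to\infty}E(t)$ exists and is $\ge -C\eps$, so
\[
2\int_0^\infty I(\tau)\,d\tau \le E(0)+C\eps \le 2C\eps,
\]
using $E(0)=\JJ\ebeta(\uu)\le C\eps$ once more. Dividing by $2$ yields \eqref{derbound}.

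The one genuine point requiring care, which I expect to be the main (though modest) obstacle, is the rigorous justification that $E$ is absolutely continuous and that the improper integral $\int_0^\infty E'\,d\tau$ telescopes as claimed, i.e. that the boundary term at infinity is a genuine finite limit rather than merely a $\liminf$. This is handled by the monotonicity of $E$ together with its lower bound: a non-increasing function bounded below always has a finite limit at $+\infty$. With that in hand, the fundamental theorem of calculus applies to the locally absolutely continuous $E$, and both estimates follow cleanly from the already-established facts $E(0)\le C\eps$, $I\ge0$, and $R\ge-\eps M|\Omega|$.
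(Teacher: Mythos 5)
Your upper bound $E(t)\le E(0)\le C\eps$ (monotonicity of $E$ from $E'=-2I\le 0$ plus the level estimate) and your derivation of \eqref{derbound} from \eqref{enbound} are both correct and coincide with the paper's argument. The genuine gap is in your lower bound for $E$. First, the pointwise identity you extract by equating the two formulas for $E'$ has the wrong sign: from $E'=-I-R+E$ and $E'=-2I$ one gets $E=R-I$ (equivalently $R=E+I$, which is how the paper uses it later, in Corollary \ref{mainest}), not $E=I-R$. Second, and independently of the sign, your chain ``$E\ge -R\ge -\eps M|\Omega|$'' misuses the bound on $R$: the available information is $R(t)\ge -\eps M|\Omega|$, which yields $-R(t)\le \eps M|\Omega|$; the inequality $-R(t)\ge -\eps M|\Omega|$ that you invoke is equivalent to the \emph{upper} bound $R(t)\le \eps M|\Omega|$, which is not known --- $R$ contains the terms $\eps\int_\Omega|\nabla\uu|^2\,dx$ and $\tfrac{1}{2}\eps\beta\int_\Omega\langle\uu^2,A\uu^2\rangle\,dx$, and a pointwise-in-$t$ upper bound on these is essentially what one is trying to prove (cf.\ Corollary \ref{mainest}). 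The correct identity $E=R-I$ does not rescue the argument either, since it only gives $E\ge -\eps M|\Omega|-I(t)$, and $I(t)$ has no pointwise upper bound.

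The fix is to avoid the pointwise identity altogether and bound $E$ from below directly from its definition \eqref{defE}, as the paper does: since $I\ge 0$ and, by \eqref{defRI} and \eqref{F0}, $R(\tau)\ge -2\eps\int_\Omega F(\uu)\,dx\ge -\eps M|\Omega|$ for a.e.\ $\tau$, one has
\[
E(t)\;=\;e^t\int_t^\infty e^{-\tau}\bigl\{I(\tau)+R(\tau)\bigr\}\,d\tau\;\ge\;-\eps M|\Omega|\, e^t\int_t^\infty e^{-\tau}\,d\tau\;=\;-\eps M|\Omega|.
\]
With this lower bound in hand, the rest of your proof goes through verbatim: the upper bound by monotonicity, and \eqref{derbound} by integrating $E'=-2I$ over $(0,\infty)$, where your observation that the monotone, bounded-below function $E$ has a finite limit at infinity is a slightly more explicit treatment of the boundary term than the paper's.
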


\begin{proof} Since by \eqref{Eprimo} the function $E(t)$ is nonincreasing, for every
$t\geq 0$
\[
E(t) \le E(0) = \JJ\ebeta(\uu) \le C\eps
\]
by \eqref{defE}, \eqref{rewriteJJ} and Lemma \eqref{Lemma:LevelEstimate}. On the other hand,
recalling \eqref{F0}, by \eqref{defE} and \eqref{defRI}
\[
E(t)   \geq - e^t \int_t^{\infty} e^{-\tau} \int_{\Omega} 2\eps\sum_{i=1}^k F_i(u_i) \,dx d\tau  \geq - \eps M |\Omega| e^t \int_t^{\infty} e^{-\tau} d\tau = -C \eps.
\]
Next, integrating \eqref{Eprimo} yields

\[
2\int_0^t I(\tau) \, d\tau = E(0) - E(t) \leq E(0) \leq  C\eps
\]
thanks to \eqref{enbound}, and
\eqref{derbound} is obtained letting $t \to + \infty$.
\end{proof}

\begin{cor}\label{mainest}
If $\uu $ is a minimizer of $\JJ\ebeta$ in $\UU_0$, then

\begin{equation}\label{eq:stima1}
\int_t^{t+1}\!\int_{\Omega} \left\{
|\nabla \uu|^2 + \beta\langle \uu^2, A \uu^2 \rangle\right\}\,dxd\tau \leq C
\quad \forall t \geq 0,
\end{equation}
for some constant $C$ independent of $\eps$ and $\beta$.
\end{cor}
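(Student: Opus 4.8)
The plan is to read the target space-time integral directly off the quantity $R(\tau)$ defined in \eqref{defRI}, using a pointwise identity that ties $R$ to $I$ and $E$. First I would combine the two available expressions for $E'$: equation \eqref{eprimozero} gives $E'=-I-R+E$, while Lemma \ref{Lemma:ENERGYDERIVATIVE} gives $E'=-2I$. Subtracting these yields the key algebraic relation
\[
R(t)=E(t)+I(t)\qquad\text{for a.e. }t\ge0.
\]
This is the structural fact that drives everything: it rewrites $R$, which carries precisely the gradient and interaction terms, in terms of two quantities that the previous corollary controls in an $\eps$-small fashion.

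Next I would isolate the desired integrand from the definition of $R$. Dividing \eqref{defRI} by $\eps$ and rearranging,
\[
\int_{\Omega}\Bigl\{|\nabla\uu|^2+\tfrac{\beta}{2}\langle\uu^2,A\uu^2\rangle\Bigr\}\dx
=\frac{R(\tau)}{\eps}+2\int_{\Omega}F(\uu)\dx.
\]
Integrating over $\tau\in[t,t+1]$ and substituting $R=E+I$ splits the right-hand side into three pieces, namely $\tfrac1\eps\int_t^{t+1}E\dtau$, $\tfrac1\eps\int_t^{t+1}I\dtau$, and $2\int_t^{t+1}\!\int_\Omega F(\uu)\dx\dtau$.

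The three pieces are then bounded directly. For the $E$-term, the uniform estimate \eqref{enbound}, $|E|\le C\eps$, together with the unit length of the integration interval, gives $\tfrac1\eps\int_t^{t+1}E\dtau\le C$. For the $I$-term, the global bound \eqref{derbound}, $\int_0^\infty I\dtau\le C\eps$, yields $\tfrac1\eps\int_t^{t+1}I\dtau\le\tfrac1\eps\int_0^\infty I\dtau\le C$. For the $F$-term, the upper bound encoded in $M$ (see \eqref{F0}) gives $2\int_t^{t+1}\!\int_\Omega F(\uu)\dx\dtau\le M|\Omega|$. Summing, the left-hand side is bounded by a constant independent of $\eps$ and $\beta$, uniformly in $t\ge0$.

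Finally, since both $|\nabla\uu|^2$ and $\langle\uu^2,A\uu^2\rangle$ are nonnegative, a bound on their combination with coefficient $\tfrac\beta2$ controls each summand separately, and replacing $\tfrac\beta2$ by $\beta$ merely doubles the relevant constant; this gives \eqref{eq:stima1}. The argument is essentially bookkeeping once $R=E+I$ is in hand, so I do not expect a genuine obstacle. The only point requiring care is that the seemingly dangerous factor $1/\eps$ is exactly compensated by the $\eps$-smallness of both $E$ and $\int_0^\infty I\dtau$ — which is precisely the content of the preceding corollary — and it is this cancellation that makes the final bound uniform in $\eps$.
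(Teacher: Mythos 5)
Your proposal is correct and follows essentially the same route as the paper: the identity $R=E+I$ obtained by comparing \eqref{eprimozero} with \eqref{Eprimo}, then the bounds \eqref{enbound}, \eqref{derbound} and the $F$-bound via $M$ in \eqref{F0}, with the $1/\eps$ factor cancelled by the $\eps$-smallness of $E$ and $\int_0^\infty I\,d\tau$. The only cosmetic difference is that the paper bounds the integrand pointwise in $\tau$ before integrating over $(t,t+1)$, whereas you integrate first and then split; the handling of the $\beta$ versus $\beta/2$ coefficient, which the paper leaves implicit, is also fine.
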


\begin{proof} Recalling \eqref{defRI}  and \eqref{F0}, we have for a.e. $\tau\geq 0$
\begin{align*}
\eps &\int_{\Omega} \left\{
|\nabla \uu(x,\tau)|^2 + \frac \beta 2\langle \uu(x,\tau)^2, A \uu(x,\tau)^2 \rangle\right\}\,dx \\
&=R(\tau)+2\eps
\int_{\Omega} F\bigl(\uu(x,\tau)\bigr) \,dx
\leq
R(\tau)+\eps M |\Omega|.
\end{align*}
On the other hand,
$R=E+I$ by \eqref{eprimozero} and \eqref{Eprimo}, so that
\[
\eps \int_{\Omega} \left\{
|\nabla \uu(x,\tau)|^2 + \frac \beta 2\langle \uu(x,\tau)^2, A \uu(x,\tau)^2 \rangle\right\}\,dx
\leq E(\tau)+I(\tau)+\eps M |\Omega|
\leq I(\tau)+C_1\eps
\]
having used \eqref{enbound}. By integrating this inequality over $(t,t+1)$
and dividing by $\eps$, one obtains
\eqref{eq:stima1} using \eqref{derbound}.
\end{proof}

\begin{proof}[Proof of Theorem \ref{ape}.]
If  $\vv $ is a minimizer of $\FF\ebeta$ over $\UU_0$ then, as already observed, the  function
$\uu(x,\tau)=\vv(x,\eps \tau)$ minimizes  $\JJ\ebeta$ over the same $\UU_0$.
Then estimate \eqref{due} is immediate since $0 \le \uu \le 1$ by Theorem \ref{Theorem:ExistenceMinimizers}, while \eqref{tre}, recalling the definition of $I$ in
 \eqref{defRI}, is obtained from the change of variable
 $\tau=\eps t$ in \eqref{derbound}. Similarly, the change of variable $\tau = s/\eps$
 in \eqref{eq:stima1} yields

\[
\int_{\eps t}^{\eps t + \eps} \int_{\Omega}  \left\{
|\nabla \vv|^2 + \beta\langle \vv^2, A \vv^2 \rangle\right\} \,dxds \leq C\eps \quad \text{ for every } t \geq 0.
\]
Choosing $t=\tau/\eps+j$ and summing the
corresponding inequality with $j=0,1,\ldots, \kappa-1$ gives

\[
\int_{\tau}^{\tau + \kappa\eps} \int_{\Omega}
\left\{
|\nabla \vv|^2 + \beta\langle \vv^2, A \vv^2 \rangle\right\}
\,dxds \leq C\kappa\eps \quad \forall \tau \geq 0
\]
and, given $T\geq\eps$, \eqref{uno} follows choosing $\kappa=\lceil T/\eps\rceil$.
\end{proof}

\section{The double limit}\label{double}

This section is devoted to the proof of Theorem \ref{Theorem:MainResult} and we begin by proving two lemmas.
We recall that  $\UU_0$ stands for  $\UU_{\vv_0,\mathbf{g}_0}$ or $\UU_{\vv_0}$.

\begin{lemma}
\label{limbeta}
For every $\eps\in (0,1)$ and every $\beta>0$, let
 $\vv_{\eps,\beta}\in \UU_0$ be a minimizer of $\FF_{\eps,\beta}$
 on $\UU_0$. Then, for every fixed $\eps$, there exist a subsequence $\beta_n \to +\infty$ and a function $\vv_{\eps} \in \UU_{seg} \cap L^{\infty}(\Omega\times(0,\infty))^k$ such that, as $n \to \infty$,

\begin{equation}\label{conv1Strong}
\vv_{\eps,\beta_n} \to \vv_\eps \quad\text{strongly in $\UU$, and pointwise a.e. in $\Omega \times \R^+$},
\end{equation}
and, for every $T > 0$,

\begin{equation}\label{eq:limitCompTerm}
\lim_n \, \beta_n \int_0^T\!\!\int_{\Omega} \langle \vv_{\eps,\beta_n}^2, A \vv_{\eps,\beta_n}^2 \rangle \,dxdt = 0.
\end{equation}
Every such function $\vv_\eps$ is an  absolute minimizer
of the functional $\FF_\eps$, defined in \eqref{defFeps}, on $\UU_{seg}$, and  satisfies the same
estimates as $\vv$ in
\eqref{uno} (with $\beta =0$), \eqref{due} and \eqref{tre}.
 \end{lemma}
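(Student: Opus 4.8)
The plan is to first obtain weak precompactness from the $\beta$-uniform bounds of Theorem~\ref{ape}, then identify the weak limit as a \emph{segregated} minimizer of $\FF_\eps$, and only afterwards upgrade the convergence to strong by a convergence-of-energy argument. Throughout, $\eps\in(0,1)$ is fixed, so the constants in \eqref{uno}, \eqref{due}, \eqref{tre}, being independent of $\beta$, are uniform along the family $\{\vv_{\eps,\beta}\}_\beta$.

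First I would set up precompactness and segregation. By \eqref{due}, \eqref{tre} and \eqref{uno} (with $\tau=0$ and $T=\max\{T_0,\eps\}$), the family $\{\vv_{\eps,\beta}\}_\beta$ is bounded in $H^1(\Omega_{T_0})^k$ for every fixed $T_0>0$. A diagonal argument over $T_0=1,2,\dots$ then produces a subsequence $\beta_n\to+\infty$ and a limit $\vv_\eps$ with $\vv_{\eps,\beta_n}\rightharpoonup\vv_\eps$ weakly in $H^1(\Omega_T)^k$ for every $T$; by the Rellich theorem the convergence is strong in $L^2(\Omega_T)^k$ and pointwise a.e.\ in $\Omega\times\R^+$. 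Since $\UU_0$ is convex and closed, hence weakly closed, $\vv_\eps\in\UU_0$, while $0\le\vv_{\eps,\beta_n}\le1$ (Theorem~\ref{Theorem:ExistenceMinimizers}) passes to the limit to give $0\le\vv_\eps\le1$, so $\vv_\eps\in L^\infty$. Segregation is immediate from \eqref{uno}: dividing by $\beta_n$ gives $\int_0^T\!\int_\Omega\langle\vv_{\eps,\beta_n}^2,A\vv_{\eps,\beta_n}^2\rangle\,dxdt\le C/\beta_n\to0$, and since the integrand is bounded and converges a.e., dominated convergence forces $\langle\vv_\eps^2,A\vv_\eps^2\rangle=0$; as $a_{ij}>0$ for $i\ne j$ this is exactly $v_{\eps,i}\,v_{\eps,j}=0$, i.e.\ $\vv_\eps\in\UU_{seg}$.

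The core of the proof is a variational comparison exploiting that the limit $\vv_\eps$ is itself an admissible competitor on which the penalization vanishes. Writing $\FF_{\eps,\beta}=\FF_\eps+P_\beta$ with
\[
P_\beta(\vv):=\frac{\beta}{2}\int_0^\infty\!\!\int_\Omega\frac{e^{-t/\eps}}{\eps}\langle\vv^2,A\vv^2\rangle\,dxdt\ge0,
\]
minimality of $\vv_{\eps,\beta_n}$ together with $P_{\beta_n}(\vv_\eps)=0$ gives $\FF_\eps(\vv_{\eps,\beta_n})+P_{\beta_n}(\vv_{\eps,\beta_n})\le\FF_{\eps,\beta_n}(\vv_\eps)=\FF_\eps(\vv_\eps)$. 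Now $\FF_\eps$ is the sum of a nonnegative, weakly lower semicontinuous quadratic functional of the derivatives and the term $-2\int_0^\infty\!\int_\Omega\frac{e^{-t/\eps}}{\eps}F(\vv)\,dxdt$, which converges along the subsequence by dominated convergence (the weight is integrable and $F$ is bounded on $[0,1]$); hence $\liminf_n\FF_\eps(\vv_{\eps,\beta_n})\ge\FF_\eps(\vv_\eps)$. Combining this with $P_{\beta_n}\ge0$ and the previous inequality, a short squeezing argument yields simultaneously
\[
\FF_\eps(\vv_{\eps,\beta_n})\to\FF_\eps(\vv_\eps)\qquad\text{and}\qquad P_{\beta_n}(\vv_{\eps,\beta_n})\to0.
\]
Minimality of $\vv_\eps$ over $\UU_{seg}$ then follows because any segregated competitor $\mathbf{z}$ satisfies $\FF_{\eps,\beta_n}(\mathbf{z})=\FF_\eps(\mathbf{z})$, so $\FF_\eps(\vv_\eps)\le\liminf_n\FF_\eps(\vv_{\eps,\beta_n})\le\liminf_n\FF_{\eps,\beta_n}(\vv_{\eps,\beta_n})\le\FF_{\eps,\beta_n}(\mathbf{z})=\FF_\eps(\mathbf{z})$.

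The main obstacle is the upgrade to \emph{strong} convergence in $\UU$. From $\FF_\eps(\vv_{\eps,\beta_n})\to\FF_\eps(\vv_\eps)$ and the already established convergence of the potential term, the derivative part $\int\frac{e^{-t/\eps}}{\eps}\big(\eps|\de\vv_{\eps,\beta_n}|^2+|\nabla\vv_{\eps,\beta_n}|^2\big)\,dxdt$ converges to the corresponding quantity for $\vv_\eps$. Since each of the two summands is separately weakly lower semicontinuous and their sum converges, the norms of $\de\vv_{\eps,\beta_n}$ and of $\nabla\vv_{\eps,\beta_n}$ in the weighted space $L^2$ with weight $e^{-t/\eps}/\eps$ converge to those of $\vv_\eps$. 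Weak convergence in this Hilbert space holds on each $\Omega_T$ and extends globally because the uniformly bounded derivative norms (a consequence of the level estimate, Lemma~\ref{Lemma:LevelEstimate}) make the tails $\{t\ge T\}$ negligible; norm convergence plus weak convergence then gives strong convergence of the derivatives in this weighted $L^2$. As the weight lies between positive constants on every $\Omega_T$, this is precisely strong convergence in $H^1(\Omega_T)^k$ for all $T$, i.e.\ strongly in $\UU$. The remaining assertions are routine: \eqref{eq:limitCompTerm} follows from $P_{\beta_n}\to0$ after bounding the weight from below by $e^{-T/\eps}/\eps$ on $(0,T)$, while $\vv_\eps$ inherits \eqref{uno} (with $\beta=0$), \eqref{due} and \eqref{tre} by passing to the limit, using the lower semicontinuity of the $L^2$ norms of $\nabla\vv$ and $\de\vv$.
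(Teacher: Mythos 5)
Your proposal is correct and follows essentially the same route as the paper's proof: weak compactness from the uniform estimates of Theorem \ref{ape}, segregation of the limit by dividing \eqref{uno} by $\beta_n$ and passing to the limit, minimality over $\UU_{seg}$ by combining weak lower semicontinuity with the fact that segregated competitors (in particular $\vv_\eps$ itself) pay no penalization, and strong convergence via convergence of the energies. The only difference is one of presentation: you establish energy convergence and $P_{\beta_n}(\vv_{\eps,\beta_n})\to 0$ before minimality rather than after, and you spell out the weighted-$L^2$ ``norm convergence plus weak convergence implies strong convergence'' step that the paper leaves implicit in its final sentence.
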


\begin{proof} Estimates \eqref{uno}, \eqref{due} and \eqref{tre} of Theorem \ref{ape}, applied with $\vv=\vv\ebeta$ and $\tau=0$, reveal that
the family $\{\vv_{\eps,\beta} \}$ is equibounded in $H^1(\Omega_T)^k$ for every $T>0$,
as well as in $L^\infty(\Omega \times \R^+)^k$.
Therefore, keeping $\eps>0$ fixed, there exist a sequence $\{\vv_{\eps,\beta_n}\}$, with $\beta_n \to \infty$, and a function $\vv_\eps \in \UU$ satisfying

\begin{equation}\label{conv1}
\vv_{\eps,\beta_n} \rightharpoonup \vv_\eps \quad\text{weakly in $\UU$,
 and pointwise a.e. in $\Omega \times \R^+$.}
\end{equation}
Moreover, since $\UU_0$ is closed and convex, also $\vv_\eps\in\UU_0$,
and by pointwise convergence $\vv_\eps \in L^\infty(\Omega \times \R^+)^k$. Furthermore,
using first Fatou's lemma, and then \eqref{uno} with $\tau=0$, we see that
\[
\int_0^T\!\!\int_\Omega \langle A\vv_\eps^2, \vv_\eps^2\rangle \dx\dt\leq
\liminf_{n\to\infty}
\int_0^T\!\!\int_\Omega \langle A\vv_{\eps,\beta_n}^2, \vv_{\eps,\beta_n}^2\rangle \,\dx\dt
\leq \liminf_{n\to\infty} \frac{CT}{\beta_n}=0
\]
for every $T>0$, which shows that $\vv_\eps \in \UU_{seg}$.

To show that the function $\vv_\eps$  satisfies the required  estimates,
it suffices to rewrite
\eqref{uno} (with $\beta =0$), \eqref{due} and \eqref{tre}
with $\vv=\vv_{\eps,\beta_n}$, and pass to the limit: weak lower semicontinuity preserves all the inequalities, which are then inherited by $\vv_\eps$.

Now we show that each $\vv_\eps$ minimizes $\FF_\eps$ on $\UU_{seg}$. To this aim, we first notice that by weak lower semicontinuity it follows
\begin{equation}\label{eq:LSCFinal}
\FF_\eps(\vv_\eps) \le \liminf_n \FF_{\eps}(\vv_{\eps,\beta_n})
\le \liminf_n \FF_{\eps,\beta_n}(\vv_{\eps,\beta_n}).
\end{equation}
Then we take an arbitrary $\ww \in \UU_{seg}$ and observe that $\FF_{\eps,\beta_n}(\ww)=\FF_{\eps}(\ww)$ since $\ww$, being segregated, pays no penalization. Since moreover $\ww \in \UU_0$ and $\vv_{\eps,\beta_n}$ minimizes $\FF_{\eps,\beta_n}$ on $\UU_0$, we have

\[
\liminf_n \FF_{\eps,\beta_n}(\vv_{\eps,\beta_n}) \le \liminf_n \FF_{\eps,\beta_n}(\ww) = \liminf_n \FF_\eps (\ww) = \FF_\eps(\ww),
\]
and so $\FF_\eps(\vv_\eps) \leq \FF_\eps(\ww)$, for all $\ww \in \UU_{seg}$, i.e. $\vv_\eps$ is an absolute minimizer.

We are left to show that $\vv_{\eps,\beta_n} \to \vv_\eps$ strongly in $\UU$ and that \eqref{eq:limitCompTerm} holds. To do so, it is enough to notice that since $\vv_\eps \in \UU_{seg}$, we have the bound

\[
\FF_{\eps,\beta_n}(\vv_{\eps,\beta_n}) \leq \FF_{\eps,\beta_n}(\vv_\eps) = \FF_\eps(\vv_\eps) ,
\]
and thus, thanks to \eqref{eq:LSCFinal}, we  obtain

\begin{equation}\label{eq:LimitFunctionalbetaMin}
\FF_\eps(\vv_\eps) \le \liminf_n \FF_{\eps,\beta_n}(\vv_{\eps,\beta_n}) \le 	\limsup_n \FF_{\eps,\beta_n}(\vv_{\eps,\beta_n}) \le \FF_\eps(\vv_\eps),
\end{equation}
showing that $\FF_{\eps,\beta_n}(\vv_{\eps,\beta_n}) \to \FF_\eps(\vv_\eps)$ as $n\to \infty$.
Consequently, and by \eqref{eq:LSCFinal}, we deduce that

\begin{align*}
\limsup_n \frac{1}{2} \beta_n& \int_0^\infty\!\!\int_{\Omega} \frac{e^{-t/\eps}}{\eps} \langle \vv_{\eps,\beta_n}^2, A \vv_{\eps,\beta_n}^2 \rangle \,dxdt = \limsup_n \left(\FF_{\eps,\beta_n}(\vv_{\eps,\beta_n}) - \FF_{\eps}(\vv_{\eps,\beta_n})\right)
\\
 & \le \lim_n  \FF_{\eps,\beta_n}(\vv_{\eps,\beta_n}) -\liminf_n  \FF_{\eps}(\vv_{\eps,\beta_n}) \le \FF_\eps(\vv_\eps) - \FF_\eps(\vv_\eps) =  0
\end{align*}
and \eqref{eq:limitCompTerm} is proved. Now the strong convergence is a direct consequence of \eqref{eq:limitCompTerm} and \eqref{eq:LimitFunctionalbetaMin}.
\end{proof}

\begin{remark}\label{Rem:MinFeps} If one assumes that the all the functions $F_i$ are concave, then one can prove that the functional $\FF_\eps$ has a unique minimizer on $\UU_{seg}$. It suffices to repeat step by step the proof of Theorem 4.1 in \cite{ConTerVer2005:art} (the fact that \cite{ConTerVer2005:art} deals with an elliptic problem does not pose any difficulty, since the extra term $|\de \vv|^2$ in
$F_\eps$ behaves exactly like $|\nabla \vv|^2$ in all the computations, and the weight is just a coefficient). In particular this shows that, under the concavity assumption, there is no need to pass to subsequences in the preceding lemma.
\end{remark}

\begin{lemma}
\label{disug}
If $\vv_\eps = (v_\eps^{1},\ldots,v_\eps^{k})$ is one of the functions provided by  Lemma \ref{limbeta},
then
\begin{equation}
\label{disugset1}
\int_0^\infty\!\!\int_{\Omega} \Big\{\eta \de v_\eps^i
 +\eps \de v_\eps^{i} \de \eta
 + \nabla v_\eps^{i} \cdot \nabla \eta - f_i(v_\eps^{i})\eta
\Big\}\, dxdt \leq 0,\quad \forall
\eta \in C_0^\infty(\Omega\times \R^+),\quad \eta\geq 0,
\end{equation}
for every $i=1,\ldots,k$, and
\begin{equation}
\label{disugset2}
\int_0^\infty\!\!\int_{\Omega} \Big\{\eta \de \widehat v_\eps^i
 +\eps \de \widehat v_\eps^i \de \eta
 + \nabla \widehat v_\eps^i \cdot \nabla \eta - \widehat f_i \eta
\Big\}\, dxdt \geq 0,\quad \forall
\eta \in C_0^\infty(\Omega\times \R^+),\quad \eta\geq 0,
\end{equation}
for every $i=1,\ldots,k$, where
$\widehat v_\eps^i=v_\eps^i-\sum_{j\neq i} v_\eps^j$ and
$\widehat f_i=f_i(v_\eps^i)-\sum_{j\neq i} f_j(v_\eps^j)$.
\end{lemma}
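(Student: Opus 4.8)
The plan is to use the characterization, established in Lemma \ref{limbeta}, that every limit function $\vv_\eps=(v_\eps^1,\dots,v_\eps^k)$ is an absolute minimizer of $\FF_\eps$ on the constraint set $\UU_{seg}$, and to probe this minimality with two families of segregation-preserving competitors. A preliminary structural observation drives everything: if $\vv\in\UU_{seg}$, then $v_av_b=0$ a.e. for $a\ne b$ forces $\nabla v_a\cdot\nabla v_b=0$ and $\de v_a\,\de v_b=0$ a.e. (on $\{v_a=0\}$ one has $\nabla v_a=0$ a.e., and symmetrically), so that all mixed products drop and $|\nabla\vv|^2=|\nabla\widehat v_\eps^i|^2$, $|\de\vv|^2=|\de\widehat v_\eps^i|^2$. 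Thus the Dirichlet and inertial parts of $\FF_\eps$ can be read off from the single scalar function $\widehat v_\eps^i$. Throughout I would exploit the device $\phi:=e^{t/\eps}\eta$, with $\eta\in C_0^\infty(\Omega\times\R^+)$ and $\eta\ge0$: since $\phi$ is again a nonnegative admissible direction, testing the (weighted) variations against $\phi$ cancels the weight $e^{-t/\eps}/\eps$ and, through $\de\phi=e^{t/\eps}(\eta/\eps+\de\eta)$, manufactures precisely the extra term $\eta\,\de v_\eps^i$ that appears in \eqref{disugset1}--\eqref{disugset2}.

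For \eqref{disugset1} I would decrease the single component $v_\eps^i$. For small $s>0$ set $v^{(s)}:=(v_\eps^1,\dots,(v_\eps^i-s\phi)^+,\dots,v_\eps^k)$; since $(v_\eps^i-s\phi)^+\le v_\eps^i$ the support only shrinks, so $v^{(s)}\in\UU_{seg}$ (the initial and boundary data are untouched because $\phi$ is compactly supported). Minimality gives $\frac{d}{ds}\FF_\eps(v^{(s)})\big|_{s=0^+}\ge0$. Because $\nabla v_\eps^i=0$ a.e. on $\{v_\eps^i=0\}$ and $f_i(0)=F_i'(0)=0$, the set $\{v_\eps^i=0\}$ contributes nothing, and the one-sided derivative equals $-2\int\tfrac{e^{-t/\eps}}{\eps}\{\eps\,\de v_\eps^i\,\de\phi+\nabla v_\eps^i\cdot\nabla\phi-f_i(v_\eps^i)\phi\}$. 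Hence this integral is $\le0$; substituting $\phi=e^{t/\eps}\eta$ and multiplying by $\eps$ yields exactly \eqref{disugset1}.

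For \eqref{disugset2} I would instead raise the reflected function $\widehat v_\eps^i$. Given $\eta\ge0$ I build a competitor $v^{(s)}\in\UU_{seg}$ whose reflected function equals $\widehat v_\eps^i+s\phi$, by taking its $i$-th component to be $(\widehat v_\eps^i+s\phi)^+$ and distributing the negative part $(\widehat v_\eps^i+s\phi)^-$ among the remaining components proportionally to the $v_\eps^j$; by segregation at most one $v_\eps^j$ is active at each point, so this redistribution simply lowers that single active neighbour by $s\phi$. Using the structural identity, the quadratic part of $\FF_\eps(v^{(s)})$ depends on $s$ only through $\widehat v_\eps^i+s\phi$, while a pointwise computation of the potential--again using $f_m(0)=0$--gives, after integration, $\frac{d}{ds}F(v^{(s)})\big|_{s=0^+}=\int\widehat f_i\,\phi$ with $\widehat f_i=f_i(v_\eps^i)-\sum_{j\ne i}f_j(v_\eps^j)$. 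Minimality then forces $\int\tfrac{e^{-t/\eps}}{\eps}\{\eps\,\de\widehat v_\eps^i\,\de\phi+\nabla\widehat v_\eps^i\cdot\nabla\phi-\widehat f_i\phi\}\ge0$, and the substitution $\phi=e^{t/\eps}\eta$ produces \eqref{disugset2}.

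The delicate point, and the step I expect to absorb most of the work, is the rigorous handling of the second competitor. One must verify that the proportional redistribution genuinely produces a function in $\UU$ (the quotients $v_\eps^j/\sum_{l\ne i}v_\eps^l$ must be controlled, which is exactly where segregation helps, each quotient being effectively a characteristic function), that it is nonnegative and meets the prescribed initial and boundary data, and that $s\mapsto\FF_\eps(v^{(s)})$ is one-sidedly differentiable at $0$. The last requires justifying differentiation through the positive parts across the free boundary $\{\widehat v_\eps^i=0\}$ by dominated convergence, in the spirit of \cite{ConTerVer2005:art}; the assumptions $f_m(0)=0$ and the a.e. vanishing of $\nabla v_\eps^m$ on $\{v_\eps^m=0\}$ are precisely what make the free-boundary contributions disappear and leave the two clean inequalities.
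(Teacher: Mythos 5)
Your argument for \eqref{disugset2} is, in substance, identical to the paper's: the paper uses precisely the competitor $v_1^\delta=(\widehat v_\eps^1+\delta\psi)^+$, $v_j^\delta=(v_\eps^j-\delta\psi)^+$ for $j\ne 1$, the recombination identity $|\nabla(\widehat v_\eps^1+\delta\psi)^+|^2+|\nabla(\widehat v_\eps^1+\delta\psi)^-|^2=|\nabla(\widehat v_\eps^1+\delta\psi)|^2$ (and its analogue for $\de$), and $f_j(0)=0$ to compute the potential term. Note that your ``proportional redistribution'' of the negative part is, thanks to segregation, literally the competitor $(v_\eps^j-s\phi)^+$; writing it this way, as the paper does, makes the admissibility worries in your last paragraph (control of the quotients $v_\eps^j/\sum_l v_\eps^l$) disappear, since no quotients ever need to be formed. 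Where you genuinely diverge is \eqref{disugset1}: the paper does \emph{not} obtain it from minimality of $\vv_\eps$ on $\UU_{seg}$, but goes back to the penalized minimizers $\vv_{\eps,\beta_n}$, whose Euler--Lagrange equations on the unconstrained set $\UU_0$ are genuine equalities; discarding the nonnegative term $\beta_n w_n^i\varphi\sum_j a_{ij}(w_n^j)^2$ for $\varphi\ge 0$ gives the inequality for each $n$, and the weak convergence of Lemma \ref{limbeta} passes it to the limit. Your intrinsic route (the one-sided inner variation $(v_\eps^i-s\phi)^+$, admissible because $v_\eps^i\ge 0$ so the support only shrinks) is also valid, and it even proves slightly more --- the inequality holds for \emph{any} minimizer of $\FF_\eps$ on $\UU_{seg}$ with $0\le\vv_\eps\le 1$, not only for limits of penalized minimizers --- at the price of differentiating through a positive part, which the paper's route avoids entirely for this half of the lemma.

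One step of your part for \eqref{disugset1} is stated incorrectly and should be repaired (the repair is easy). The difference quotient of $\FF_\eps$ along $s\mapsto(v_\eps^i-s\phi)^+$ contains, besides the terms you list, the layer term
\begin{equation*}
-\frac 1s\int_0^\infty\!\!\int_\Omega\frac{\emt}{\eps}\,\bigl(\eps|\de v_\eps^i|^2+|\nabla v_\eps^i|^2\bigr)\,\mathbf{1}_{\{0<v_\eps^i\le s\phi\}}\dx\dt,
\end{equation*}
coming from the region where the truncation bites but $v_\eps^i>0$. The facts that $\nabla v_\eps^i=0$ a.e.\ on $\{v_\eps^i=0\}$ and $f_i(0)=0$ do \emph{not} make this term vanish as $s\to0^+$ (in one space dimension, $v(x)=x^+$ with $\phi\equiv 1$ near the origin gives a layer term identically equal to $-1$), so the one-sided derivative need not \emph{equal} the expression you wrote. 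Your conclusion survives because this term is nonpositive: from minimality,
\begin{equation*}
0\le \frac{\FF_\eps(v^{(s)})-\FF_\eps(\vv_\eps)}{s}\le A_s,\qquad
A_s\to -2\int_0^\infty\!\!\int_\Omega\frac{\emt}{\eps}\Bigl\{\eps\,\de v_\eps^i\,\de\phi+\nabla v_\eps^i\cdot\nabla\phi-f_i(v_\eps^i)\phi\Bigr\}\dx\dt,
\end{equation*}
which is exactly the inequality you need before substituting $\phi=e^{t/\eps}\eta$. So phrase the argument as a one-sided bound on the difference quotients (discarding the layer term by its sign), rather than as an identity for the derivative; dominated convergence alone cannot deliver the identity you claimed.
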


\begin{proof}
Recall that $\vv_\eps$ is obtained, as in \eqref{conv1},
as limit of functions $\vv_{\eps, \beta_n}$ that minimize
the corresponding functionals
$\FF_{\eps,\beta_n}$ on $\UU_0$.
Let $\vv_{\eps, \beta_n}=(w_n^1,\ldots,w_n^k)$ and focus, say, on the
first component $w_n^1$.
Since $\vv_{\eps, \beta_n}$ minimizes
$\FF_{\eps,\beta_n}$ on $\UU_0$,
the vanishing of the first variation of  $\FF_{\eps,\beta_n}$ entails,
in particular, that
\[
\int_0^\infty\!\!\int_{\Omega} \frac{e^{-t/\eps}}\eps \Big\{\eps \de w_n^1 \de \varphi + \nabla w_n^1 \cdot \nabla \varphi - f_1(w_n^1)\varphi
+ \beta w_n^1\varphi \sum_{j=1}^k a_{1j}(w_n^j)^2 \Big\}\, dxdt =0,
\]
for every test function
 $\varphi \in C_0^\infty(\Omega\times \R^+)$. If, in addition, $\varphi\geq 0$,
neglecting the last term (which is positive) we obtain
\begin{equation*}
\int_0^\infty\!\!\int_{\Omega} e^{-t/\eps} \Big\{\eps \de w_n^1 \de \varphi + \nabla w_n^1 \cdot \nabla \varphi - f_1(w_n^1)\varphi
\Big\}\, dxdt \leq 0,\quad \forall
\varphi \in C_0^\infty(\Omega\times \R^+),\quad \varphi\geq 0.
\end{equation*}
This is best exploited if
 we choose $\varphi(x,t) =  e^{t/\eps}\eta(x,t)$, which gives
\begin{equation*}
\int_0^\infty\!\!\int_{\Omega} \Big\{\eta \de w_n^1
 +\eps \de w_n^1 \de \eta
 + \nabla w_n^1 \cdot \nabla \eta - f_1(w_n^1)\eta
\Big\}\, dxdt \leq 0,\quad \forall
\eta \in C_0^\infty(\Omega\times \R^+),\quad \eta\geq 0.
\end{equation*}
On the other hand, according to \eqref{conv1}, the first component
$w_n^1$ converges (weakly in $H^1(\Omega\times (0,T))$ for every $T>0$)
to $v_\eps^1$, the first component of the vector function $\vv_\eps$.  Thus,
letting $n\to\infty$ in the last inequality, one obtains
\eqref{disugset1} when $i=1$
(when $i>1$, the proof is the same).

The proof of \eqref{disugset2} is longer but more direct: it relies only on the fact that
$\vv_\eps$ minimizes $\FF_\eps$ over $\UUseg$, by
 constructing special competitors in the spirit of \cite{ConTerVer2005:art}. So,
focusing for instance on the first component of $\vv_\eps$,
according to the statement of the lemma we define
\[
\widehat v_\eps^1 = v_\eps^1- \sum_{j=2}^k  v_\eps^j,
\]
and we consider an arbitrary test function $\psi \in C_0^\infty(\Omega\times \R^+)$
such that $\psi\geq 0$.
Then, for $\delta\geq 0$, we construct a competitor $\vv^\delta = (v_1^\delta,\ldots,v_k^\delta)$, by setting
\[
v^\delta_1=(\widehat v_\eps^1 +\delta \psi)^+,\qquad
v^\delta_j =
 ( v_\eps^j -\delta \psi)^+ \quad\forall  j \in\{2,\ldots,k\}.
 \]
It is easy to see that $v_i^{\delta} \cdot v_j^{\delta} = 0$ a.e.
in $\Omega\times(0,\infty)$ for every $i \not= j$, so that $\vv^\delta \in \UU_{seg}$
and therefore
\begin{equation}
\label{minimo}
\FF_{\varepsilon}(\vv^{\delta}) \geq \FF_{\eps}(\vv_\eps)\quad\forall\delta\geq 0.
\end{equation}
Observing that for a.e. $(x,t)$,
\[
\sum_{j=2}^k |\nabla v^\delta_j(x,t)|^2=\left|\nabla \,\bigl(\widehat v_\eps^1(x,t) +\delta\psi(x,t)\bigr)^-\right|^2,
\]
we see that
\[
|\nabla \vv^\delta|^2= |\nabla \widehat v^\delta_1|^2 + \sum_{j=2}^k  |\nabla \widehat v^\delta_j |^2
= |\nabla(\widehat v_\eps^1 +\delta \psi)^+|^2 +  |\nabla(\widehat v_\eps^1 +\delta \psi)^-|^2 =  |\nabla(\widehat v_\eps^1 +\delta \psi)|^2
\]
and, since moreover $|\nabla \widehat v_\eps^1|^2=|\nabla\vv_\eps|^2$, we find
\begin{equation}
\label{grad}
\int_0^\infty\!\!\!\int_\Omega \emt  |\nabla \vv^\delta |^2  \dx\dt =
\int_0^\infty\!\!\!\int_\Omega \emt  \left\{ |\nabla \vv_\eps |^2 +
2\delta \nabla \widehat v_\eps^1 \cdot \nabla \psi +\delta^2|\nabla\psi|^2\right\}\,dxdt.
\end{equation}
In exactly the same way, for time derivatives we have
\begin{equation}
\label{timeder}
\int_0^\infty\!\!\!\int_\Omega \emt  |\de \vv^\delta |^2  \dx\dt =
\int_0^\infty\!\!\!\int_\Omega \emt  \left\{ |\de \vv_\eps |^2 +
2\delta \de \widehat v_\eps^1 \de \psi +\delta^2|\de\psi|^2\right\}\,dxdt.
\end{equation}

Finally we notice that, by straightforward computations,
\begin{equation}
\label{fi}
\frac{\partial}{\partial \delta} F_1 (v_1^\delta)\Big|_{\delta=0^+} =
f_1(v_\eps^1)\psi,\qquad
\frac{\partial}{\partial \delta} F_j (v_j^\delta)\Big|_{\delta=0^+} =
-f_j(v_\eps^j)\psi,\quad j=2,\ldots,k.
\end{equation}
By \eqref{grad}, \eqref{timeder} and \eqref{fi}, letting
$\widehat f_1=f_1(v_\eps^1)-\sum_{j\neq 2} f_j(v_\eps^j)$ and
recalling \eqref{defFeps}, we can write
\[
\FF_\eps(\vv^\delta) = \FF_\eps(\vv_\eps) +2\delta
\int_0^\infty\int_\Omega \frac{\emt}\eps
\Big\{\eps \de \widehat v_\eps^1 \de \psi +\nabla \widehat v_\eps^1 \cdot \nabla \psi -
\widehat f_1\psi \Big\}\dx\dt + o(\delta)
\]
as $\delta \to 0^+$ and therefore, due to \eqref{minimo}, we must have
\[
\int_0^\infty\int_\Omega \emt \Big\{\eps \de \widehat v_\eps^1 \de \psi
+\nabla \widehat v_\eps^1 \cdot \nabla \psi -  \widehat f_1\psi \Big\}\dx\dt \ge 0.
\]
Finally, choosing as before $\varphi(x,t) =  e^{t/\eps}\eta(x,t)$
where $\eta\in C_0^\infty(\Omega\times \R^+)$ and $\eta\geq 0$,
one obtains
\eqref{disugset1} when $i=1$
(when $i>1$, the proof is the same).
\end{proof}

\begin{proof}[Proof of Theorem \ref{Theorem:MainResult}]
Let $\{\vv_{\eps,\beta}\}$ be the family of minimizers of $\FF_{\eps,\beta}$ found in Theorem \ref{minimizers}.
By Lemma \ref{limbeta}, for every fixed $\eps \in (0,1)$ there exist a subsequence $\beta_n \to +\infty$ and a function $\vv_{\eps} \in \UU_{seg} \cap L^{\infty}(\Omega\times(0,\infty))^k$ such that, as $n \to \infty$,

\[
\vv_{\eps,\beta_n} \to \vv_\eps \quad\text{strongly in $\UU$, and pointwise a.e. in $\Omega \times \R^+$}.
\]
Moreover, every such function $\vv_\eps$ is an  absolute minimizer
of the functional $\FF_\eps$, defined in \eqref{defFeps}, on $\UU_{seg}$, and  satisfies the same
estimates as $\vv$ in
\eqref{uno} (with $\beta =0$), \eqref{due} and \eqref{tre}.
Therefore, there exist a sequence $\eps_n \to 0$ and a function $\ww \in \UU_0$ such that, as $n \to \infty$,
\[
\vv_{\eps_n} \rightharpoonup \ww \quad\text{weakly in $\UU$, and pointwise a.e. in
  $\Omega \times \R^+$.}
\]
The pointwise convergence also shows that $\ww \in  \UU_{seg} \cap L^\infty(\Omega\times \R^+)$.
By Lemma \ref{disug}, each $\vv_\eps$ satisfies   \eqref{disugset1}
and \eqref{disugset2}. Hence,
to show that $\ww$ satisfies the inequalities  \eqref{eq:DiffIneq},
it suffices to let $\eps\to 0$ (along the sequence $\eps_n$) in \eqref{disugset1}
and  \eqref{disugset2}, for every fixed $i=1,\ldots,k$.
\end{proof}

\begin{remark}\label{Rem:UniqLimEps} By \cite[Theorem 1.2]{Wang2015:art}, under additional regularity assumptions on the boundary data $\vv_0$ and $\mathbf{g}_0$, the system \eqref{bc}-\eqref{eq:DiffIneq} has at most one weak solution. As a consequence, the weak limit of the family $\{\vv_\eps\}_{\eps > 0}$ exists and coincides with $\ww \in \mathcal{S}$.
\end{remark}

\section{Comments and open problems}\label{SectionDiscussions}

We end the paper with some comments and open problems.

\paragraph{The elliptic setting.} In this paragraph we briefly describe how the existence of segregated solutions in the elliptic case, treated in \cite{ConTerVer2002:art,ConTerVer2005:art} with variational techniques, can be recovered as a particular case of Theorem \ref{Theorem:MainResult}.

Consider the functional  $\mathcal{E}_{\beta}: H^1(\Omega)^k \to (-\infty,+\infty]$ defined by
\[
\mathcal{E}_{\beta}(\vv) :=  \int_{\Omega} \left\{ |\nabla \vv|^2 - 2F(\vv) +
\frac\beta 2 \langle \vv^2, A \vv^2 \rangle  \right\} dx.
\]
In the next Proposition we link the functional $\mathcal{E}_\beta$ with the functional $\FF_{\eps,\beta}$, while in Theorem \ref{elliptic}, we show how our analysis allows one to recover the convergence results in the elliptic framework (see for instance \cite[Section 5]{ConTerVer2005:art}, \cite[Section 8]{TavTer2012:art}).

\begin{prop}
For $\eps,\beta >0$, let $\vv(t,x)$ be a minimizer of $\FF\ebeta$ over $\UU_{\mathbf{g}_0}$.
Then $\vv$ is time-independent, and the function $x\mapsto \vv(t,x)$ solves
the problem
\begin{equation}
\label{minE}
\min\left\{ \EE_\beta(\ww)\,|\,\, \ww\in H^1(\Omega),\quad \text{$\ww=\mathbf{g}_0$
on $\partial\Omega$}\right\}.
\end{equation}
Conversely, if $\uu$ is a solution to this problem, then the time-independent function
$(t,x)\mapsto \uu(x)$ is a minimizer of $\FF\ebeta$ over $\UU_{\mathbf{g}_0}$.
\end{prop}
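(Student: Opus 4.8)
The plan is to prove the two implications separately, exploiting the fact that $\FF_{\eps,\beta}$ over $\UU_{\mathbf{g}_0}$ imposes only a time-independent Dirichlet condition and no initial condition, so the time variable becomes a free ``dummy'' direction. The key structural observation is that for a \emph{time-independent} competitor $\ww(x,t) = \uu(x)$, one has $\de \ww \equiv 0$, so the functional collapses to
\[
\FF_{\eps,\beta}(\ww) = \int_0^\infty \frac{e^{-t/\eps}}{\eps}\,dt \int_\Omega \Big\{ |\nabla \uu|^2 - 2F(\uu) + \frac\beta 2 \langle \uu^2, A\uu^2\rangle \Big\} \,dx = \EE_\beta(\uu),
\]
since $\int_0^\infty \eps^{-1} e^{-t/\eps}\,dt = 1$. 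This identity is the bridge between the two functionals and handles the converse implication almost immediately.

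For the converse direction, suppose $\uu$ solves \eqref{minE} and let $\ww(x,t) := \uu(x)$. Given any competitor $\vv \in \UU_{\mathbf{g}_0}$, I would estimate $\FF_{\eps,\beta}(\vv)$ from below by discarding the nonnegative term $\eps|\de\vv|^2$ and using Fubini, integrating in $x$ first to recognize, for a.e.\ $t$, the elliptic energy $\EE_\beta(\vv(\cdot,t))$ of a trace that satisfies the boundary condition $\vv(\cdot,t)=\mathbf g_0$ on $\partial\Omega$. By minimality of $\uu$ this is bounded below by $\EE_\beta(\uu)$ for a.e.\ $t$, and since $\int_0^\infty \eps^{-1}e^{-t/\eps}\,dt = 1$ the whole integral is $\ge \EE_\beta(\uu) = \FF_{\eps,\beta}(\ww)$. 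Hence $\ww$ is a minimizer.

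For the forward direction, let $\vv$ minimize $\FF_{\eps,\beta}$ over $\UU_{\mathbf{g}_0}$. First I would argue time-independence: the slicing computation above shows
\[
\FF_{\eps,\beta}(\vv) \ge \int_0^\infty \frac{e^{-t/\eps}}{\eps}\, \EE_\beta\big(\vv(\cdot,t)\big)\,dt \ge \int_0^\infty \frac{e^{-t/\eps}}{\eps}\,\Big(\min \EE_\beta\Big)\,dt = \min\EE_\beta,
\]
where the first inequality drops the time-derivative term. Since the constant extension $\ww(x,t)=\uu(x)$ of any solution $\uu$ of \eqref{minE} attains the value $\min\EE_\beta$ by the identity above, equality must hold throughout for $\vv$. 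Equality in the first inequality forces $\de\vv \equiv 0$ a.e., hence $\vv$ is time-independent; equality in the second forces $\vv(\cdot,t)$ to solve \eqref{minE} for a.e.\ $t$. Combining these, $x\mapsto\vv(x,t)$ is a minimizer of the elliptic problem.

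The main obstacle I anticipate is \textbf{not} in the inequalities themselves but in making the slicing argument rigorous at the level of the function spaces: one must verify that for $\vv \in \UU = \bigcap_{T>0}H^1(\Omega_T)^k$, the slice $x\mapsto\vv(x,t)$ lies in $H^1(\Omega)^k$ with the correct trace $\mathbf g_0$ for a.e.\ $t$, and that $t\mapsto \EE_\beta(\vv(\cdot,t))$ is measurable so Fubini applies. This is where the definition \eqref{defUU} and the trace discussion preceding \eqref{eq:DFSpace} are used. A secondary subtlety is that $\EE_\beta$ can take the value $+\infty$ through the $-2F$ term, but the uniform upper bound on $F_i$ from \eqref{eq:AssumptionsPotential} keeps $-2F(\vv)$ bounded above and the remaining integrands nonnegative, so all integrals are well defined in $(-\infty,+\infty]$ exactly as in the discussion following \eqref{defUU}; no integrability pathology arises.
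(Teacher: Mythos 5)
Your proposal is correct and follows essentially the same route as the paper's proof: the same bridging identity $\FF_{\eps,\beta}(\vv)=\int_0^\infty\!\!\int_\Omega \frac{e^{-t/\eps}}{\eps}\,\eps|\de\vv|^2\,dx\,dt+\int_0^\infty \frac{e^{-t/\eps}}{\eps}\,\EE_\beta\bigl(\vv(\cdot,t)\bigr)\,dt$, the same comparison against the constant-in-time extension of an elliptic minimizer, and the same equality-forcing step (equality in the dropped time-derivative term gives $\de\vv\equiv 0$; equality in the sliced elliptic energies gives that almost every slice solves \eqref{minE}). The only differences are cosmetic: you normalize the weight via $\int_0^\infty \eps^{-1}e^{-t/\eps}\,dt=1$ where the paper works with $\eps\FF_{\eps,\beta}$, and you spell out the measurability and trace points that the paper leaves implicit.
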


\begin{proof}
The link between the two functionals is given by the identity
\[
\eps\FF\ebeta(\vv)=\eps \int_0^\infty\!\!\!\int_\Omega
\emt |\de \vv|^2\,dxdt+
\int_0^\infty\emt\, \EE_\beta\bigl(\vv(t,\cdot)\bigr)\,dt\quad\forall \vv\in\UU,
\]
which follows immediately from \eqref{dgv}.
Now let $\uu$ be a solution to \eqref{minE} and let $\widehat \uu(x,t):=\uu(x)$, so that
$\widehat \uu\in \UU_{\mathbf{g}_0}$ and $\de\widehat\uu\equiv 0$. Then, for every $\vv\in\UU_{\mathbf{g}_0}$,
using the previous identity we obtain
\begin{equation}
\label{ch1}
\eps\FF\ebeta(\widehat\uu)=\int_0^\infty\emt\, \EE_\beta\bigl(\widehat\uu(t,\cdot)\bigr)\,dt
=\int_0^\infty\emt\, \EE_\beta(\uu)\,dt
\leq \int_0^\infty\emt\, \EE_\beta\bigl(\vv(t,\cdot)\bigr)\,dt
\leq \eps\FF\ebeta(\vv)
\end{equation}
(we have used the fact that $\vv(t,\cdot)=\mathbf{g}_0$ on $\partial\Omega$ for
a.e. $t$, so that $\EE_\beta(\uu)\leq \EE_\beta\bigl(\vv(t,\cdot)\bigr)$ by the minimality of $\uu$).
If, moreover, $\vv$ minimizes $\FF\ebeta$ over $\UU_{\mathbf{g}_0}$, then $\FF\ebeta(\vv)
\leq\FF\ebeta(\widehat\uu)$, so that the two inequalities in \eqref{ch1} are in fact equalitites:
the latter equality then entails that $\de \vv\equiv 0$, i.e. $\vv$ is time-independent,
while the former, namely $\EE_\beta(\uu)= \EE_\beta\bigl(\vv(t,\cdot)\bigr)$, means
that also the function $x\mapsto \vv(t,x)$ solves \eqref{minE}.

Finally, \eqref{ch1} also shows that if $\vv$ minimizes $\FF\ebeta$
over $\UU_{\mathbf{g}_0}$, then so does $\widehat\uu$.
\end{proof}

In view of the equivalence of the minimization of $\FF\ebeta$ over $\UU_{\mathbf{g}_0}$ and the minimization of
$ \EE_\beta$ as in \eqref{minE}, we obtain, following step by step the proof of Theorem \ref{Theorem:MainResult}, the following stationary, or ``elliptic'' version of that result. Of course, to carry out this program, we assume that the function $\mathbf{g}_0$ is the trace of a segregated function $\mathbf{v}_0 \in H^1(\Omega)$.

\begin{thm}
\label{elliptic}
Let $\vv_\beta$ be a solution of \eqref{minE}.
Then there exist a sequence $\beta_n \to +\infty$ and a function $\ww \in H^1(\Omega)^k \cap L^{\infty}(\Omega)^k$ such that, as $n \to \infty$,
\[
\vv_{\beta_n} \rightharpoonup \ww \quad\text{weakly in $H^1(\Omega)^k$, and pointwise a.e. in
$\Omega$.}
\]
The components $(w_1,\dots,w_k)$ of  $\mathbf{w}$ satisfy  $\,w_i \cdot w_j = 0 \text{ a.e. in } \Omega$ for all $i\ne j$ and the system of differential inequalities
\[
\begin{cases}
- \Delta w_i  \leq f_i(w_i),\quad  i = 1,\ldots,k  \\
- \Delta \Big( w_i - \sum\limits_{j\not=i}w_j \Big ) \geq f_i(w_i) - \sum\limits_{j\not=i} f_j(w_j),
\quad i = 1,\ldots,k.
\end{cases}
\]
\end{thm}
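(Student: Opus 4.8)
The plan is to mirror the structure of the proof of Theorem~\ref{Theorem:MainResult}, exploiting that, by the preceding Proposition, the minimization of $\EE_\beta$ as in \eqref{minE} is equivalent to the minimization of $\FF\ebeta$ over $\UU_{\mathbf{g}_0}$ (restricted to time-independent functions). First I would establish the $\beta$-independent a priori bounds, which here are much simpler than those of Theorem~\ref{ape} since no time variable is present, and so are obtained directly rather than through the scaling machinery. Because $\mathbf{g}_0$ is the trace of the segregated competitor $\mathbf{v}_0$, which satisfies $0\le\mathbf{v}_0\le 1$ and $\langle \mathbf{v}_0^2,A\mathbf{v}_0^2\rangle\equiv 0$, testing the minimality of $\vv_\beta$ against $\mathbf{v}_0$ gives
\[
\EE_\beta(\vv_\beta)\le \EE_\beta(\mathbf{v}_0)=\int_\Omega\bigl\{|\nabla\mathbf{v}_0|^2-2F(\mathbf{v}_0)\bigr\}\dx=:C,
\]
a bound uniform in $\beta$ (the elliptic counterpart of Lemma~\ref{Lemma:LevelEstimate}). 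As in Theorem~\ref{Theorem:ExistenceMinimizers}, replacing $\vv_\beta$ by its componentwise truncation $\min\{1,\max\{\vv_\beta,0\}\}$ (still admissible, since $0\le\mathbf{g}_0\le 1$) only decreases $\EE_\beta$, so I may assume $0\le\vv_\beta\le 1$. Using this $L^\infty$ bound to control $-2F(\vv_\beta)$ from below, the energy bound yields $\int_\Omega\{|\nabla\vv_\beta|^2+\frac\beta2\langle \vv_\beta^2,A\vv_\beta^2\rangle\}\dx\le C'$ uniformly in $\beta$.

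From this I would extract the limit exactly as in Lemma~\ref{limbeta}. The family $\{\vv_\beta\}$ is bounded in $H^1(\Omega)^k$ and in $L^\infty(\Omega)^k$, so along a subsequence $\beta_n\to+\infty$ it converges weakly in $H^1(\Omega)^k$ and, by Rellich compactness, strongly in $L^2$ and pointwise a.e., to some $\ww\in H^1(\Omega)^k\cap L^\infty(\Omega)^k$; the boundary condition $\ww=\mathbf{g}_0$ passes to the limit because the admissible set is closed and convex. Segregation follows from the penalization bound: by Fatou and $\beta_n\to\infty$,
\[
\int_\Omega\langle \ww^2,A\ww^2\rangle\dx\le\liminf_n\int_\Omega\langle \vv_{\beta_n}^2,A\vv_{\beta_n}^2\rangle\dx\le\liminf_n\frac{C'}{\beta_n}=0,
\]
whence $w_iw_j=0$ a.e. for $i\neq j$, since $a_{ij}>0$. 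Repeating the lower-semicontinuity and competitor argument of Lemma~\ref{limbeta} (with $\EE_\beta,\EE_0$ in place of $\FF\ebeta,\FF_\eps$, and noting that segregated competitors pay no penalization), I would then show that $\ww$ is an absolute minimizer of $\EE_0(\uu):=\int_\Omega\{|\nabla\uu|^2-2F(\uu)\}\dx$ among all segregated $H^1(\Omega)^k$ functions with trace $\mathbf{g}_0$.

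It remains to derive the two systems of inequalities, which I would do as in Lemma~\ref{disug}, discarding the time-derivative terms throughout. For the first inequality, the Euler--Lagrange equation for $\vv_\beta$ reads $\int_\Omega\{\nabla v_\beta^i\cdot\nabla\varphi-f_i(v_\beta^i)\varphi+\beta v_\beta^i\varphi\sum_j a_{ij}(v_\beta^j)^2\}\dx=0$; for $\varphi\ge 0$ the nonnegative penalization term may be dropped, and passing to the limit (weak convergence of $\nabla v_{\beta_n}^i$, dominated convergence for $f_i(v_{\beta_n}^i)\varphi$) yields $-\Delta w_i\le f_i(w_i)$ weakly. For the second inequality I would use that $\ww$ minimizes $\EE_0$ among segregated competitors: fixing $\psi\in C_0^\infty(\Omega)$ with $\psi\ge 0$ and setting $\widehat{w}_i=w_i-\sum_{j\neq i}w_j$, I construct the competitor $v_i^\delta=(\widehat{w}_i+\delta\psi)^+$, $v_j^\delta=(w_j-\delta\psi)^+$ for $j\neq i$, which stays segregated and admissible, and compute $\tfrac{d}{d\delta}\EE_0(\vv^\delta)|_{\delta=0^+}\ge 0$ using the identity $|\nabla\vv^\delta|^2=|\nabla(\widehat{w}_i+\delta\psi)|^2$ and the derivatives \eqref{fi}, thereby obtaining $-\Delta\widehat{w}_i\ge\widehat{f}_i$ weakly.

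The main obstacle is, as in the parabolic case, the construction and variational analysis of these special competitors for the second inequality: one must verify that the $v_j^\delta$ remain mutually segregated and admissible, and justify the first-variation computation through the positive-part truncations. This is entirely parallel to Lemma~\ref{disug}, only simpler, since the absence of the term $|\de\vv|^2$ removes the time-derivative contributions; all remaining steps are routine elliptic transcriptions of the parabolic arguments already carried out.
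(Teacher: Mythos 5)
Your proposal is correct and is essentially the paper's own proof: the paper disposes of Theorem \ref{elliptic} in one line, asserting that, thanks to the preceding Proposition identifying solutions of \eqref{minE} with time-independent minimizers of $\FF_{\eps,\beta}$ over $\UU_{\mathbf{g}_0}$, the result follows by ``following step by step the proof of Theorem \ref{Theorem:MainResult}'', under the same standing assumption that $\mathbf{g}_0$ is the trace of a segregated $\mathbf{v}_0 \in H^1(\Omega)$. Your direct elliptic transcription --- the level bound via the segregated competitor $\mathbf{v}_0$, truncation for the $L^\infty$ bound, Fatou's lemma for segregation of the limit, dropping the nonnegative penalization term in the Euler--Lagrange inequality, and the $(\widehat{w}_i+\delta\psi)^+$, $(w_j-\delta\psi)^+$ competitors for the reverse inequality --- is precisely what that step-by-step repetition unpacks to, with the (trivial) time variable removed at the outset rather than carried along through the parabolic lemmas.
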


\paragraph{Segregated solutions to more general reaction-diffusion systems.} As we mentioned in the introduction, our minimization approach is quite flexible. We conjecture that similar techniques can be applied to show the existence of segregated solutions to a much wider class of reaction-diffusion systems with strong competition. In this direction, it seems natural to study the asymptotic limit ($\beta \to +\infty$, $\eps \to 0^+$) of minimizers of the functional

\[
\FF_{\eps,\beta}(\vv) := \int_0^\infty\!\!\int_{\Omega} \frac{e^{-t/\eps}}{\eps} \Big\{ \eps |\de \vv|^2 +  \frac{\beta}{2} \langle \vv^2, A \vv^2 \rangle  \Big\} \,dxdt + \int_0^\infty \frac{e^{-t/\eps}}{\eps} \,\mathcal{W}(\vv(\cdot,t)) \,dt,
\]
where $\mathcal{W}$ is a functional defined on some Banach space of functions, in the spirit of \cite{SerraTilli2016:art} (see also \cite{AkagiStefanelli2016:art,MielkeStefanelli2011:art}).

\paragraph{Sharp regularity of solutions and study of the free boundary.} A second interesting development is the study of the regularity of the family of minimizers $\{\vv_{\eps,\beta}\}$ of \eqref{eq:DeGiorgiFunctional}. In view of \cite{CafLin2009:art,CafSal2005:book,DanWanZha2011:art,DanWanZha2012:art,DanWanZha2012_1:art,Snelson2015:art} (see also \cite{CafLin2008:art,ConTerVer2005_1:art,NorTavTerVer2010:art,SoaveZilio2015:art,TavTer2012:art} for the elliptic framework), it is reasonable to conjecture that $\vv_{\eps,\beta}$ satisfies some uniform H\"older (or even Lipschitz) bounds and the free boundary of segregated solutions satisfies some regularity and stratification properties. However, the methods developed in \cite{DanWanZha2012:art,DanWanZha2012_1:art} do not apply directly since the Euler-Lagrange equations of $\vv_{\eps,\beta}$ are \emph{elliptic} for any $\eps > 0$, and \emph{parabolic} in the limit $\varepsilon \to 0^+$. It is natural to expect that suitable modifications of the monotonicity formulas proved in \cite[Theorem 2.1, Theorem 4.1]{DanWanZha2011:art} hold in this  setting and allow one to prove the regularity estimates mentioned above. This will be the object of further research.
\bigskip

\noindent{\bf Acknowledgements.} The authors wish to thank Prof. Susanna Terracini for inspiring and fruitful discussions concerning the results presented in this paper.

\end{document}